\newtheorem{theorem}{Theorem}[section]
\newtheorem{lemma}[theorem]{Lemma}          
\newtheorem{proposition}[theorem]{Proposition}
\theoremstyle{definition}
\newtheorem{definition}[theorem]{Definition}
\newtheorem{example}[theorem]{Example}
\theoremstyle{remark}
\numberwithin{equation}{section}     
\newcommand{\bC}{{\mathbf C}}
\newcommand{\bZ}{{\mathbf Z}}
\DeclareMathOperator{\Tr}{Tr}
\providecommand{\abs}[1]{\lvert#1\rvert}
\providecommand{\Babs}[1]{\Bigl\lvert#1\Bigr\rvert}
\newcommand{\ra}{\rightarrow}                                                                
\begin{document}                                                                             
\title{The Jones Polynomial and Khovanov Homology of Weaving Knots $W(3,n)$}
\author{
Rama Mishra
\thanks{We thank the office of the Dean of the College of Arts and Sciences,
  New Mexico State University, for arranging a visiting appointment.}
\\
Department of Mathematics
\\
IISER Pune
\\
Pune, India
\and 
Ross Staffeldt 
\thanks{We thank the office of the Vice-President for Research, New Mexico
  State University, for arranging a visit to IISER-Pune to discuss
  implementation of an MOU between IISER-Pune and NMSU.}
\\Department of Mathematical Sciences\\
New Mexico State University\\
Las Cruces, NM 88003 USA}
\maketitle
\begin{abstract}
In this paper we compute the signature for a family of knots $W(k,n)$, the weaving knots of type $(k,n)$. 
By work of E.~S.~Lee the signature calculation implies a vanishing theorem for the Khovanov homology of weaving knots. 
Specializing to knots $W(3,n)$, we develop recursion relations that enable us to compute the Jones polynomial of $W(3,n)$.
Using additional results of Lee, we compute the ranks of the Khovanov Homology of these knots.
 At the end we provide evidence for our conjecture that, asymptotically, 
 the ranks of Khovanov Homology of $W(3,n)$ are {\it normally distributed}.
\end{abstract}
\section{Introduction} \label{Introduction}
Weaving knots originally attracted interest, because it was conjectured that their complements would have
the largest hyperbolic volume for a fixed crossing number.  
Here is the weaving knot $W(3,4)$.
\begin{figure}[h!]  
  \begin{equation*}
  \xygraph{  !{0;/r1.5pc/:}
!{\hcap}[u]
!{\hcap[3]}[u]
!{\hcap[5]}[llllllll]
!{\xcaph[-8]@(0)}[dl]
!{\xcaph[-8]@(0)}[dl]
!{\xcaph[-8]@(0)}[uul]
!{\hcap[-5]}[d]
!{\hcap[-3]}[d]
!{\hcap[-1]}[d]
!{\xcaph[1]@(0)}[dl]
!{\htwist}[d]
!{\xcaph[1]@(0)}[uul]
!{\htwistneg}
!{\xcaph[1]@(0)}[dl]
!{\htwist}[d]
!{\xcaph[1]@(0)}[uul]
!{\htwistneg}
!{\xcaph[1]@(0)}[dl]
!{\htwist}[d]
!{\xcaph[1]@(0)}[uul]
!{\htwistneg}
!{\xcaph[1]@(0)}[dl]
!{\htwist}[d]
!{\xcaph[1]@(0)}[uul]
!{\htwistneg}
}    
  \end{equation*}
\end{figure} \vspace{-8em}
\newline 
Enumerating strands $1, \ldots, p$ from the outside inward, our example is
the closure of the braid $(\sigma_1 \sigma_2^{-1})^4$ on three strands.
Thus, $\sigma_1$ is a righthand twist involving strands 1 and 2, and 
$\sigma_2$ is a righthand twist involving strands 2 and 3, and so on.

In words, the weaving knot $W(p,q)$ is obtained from the torus knot $T(p,q)$
by making the standard diagram of the torus knot alternating. Symbolically, $T(p,q)$ is the
closure of the braid $(\sigma_1 \sigma_2 \cdots \sigma_{p-1})^q$, and
$W(p,q)$ is the closure of the braid $(\sigma_1 \sigma_2^{-1} \cdots \sigma_{p-1}^{\pm 1})^q$.
Obviously, the parity of $p$ is important.  If the greatest common divisor $\gcd(p,q) > 1$, 
then $T(p,q)$ and $W(p,q)$ are both links with $\gcd(p,q)$ components.  In general we
are interested only in the cases when $W(p,q)$ is an actual knot.

In \cite{Weaving_vol}  the main result is the following theorem.  
\begin{theorem}
  [Theorem 1.1, \cite{Weaving_vol}] 
If $p \geq 3$ and $q \geq 7$, then
\begin{equation*}
  v_{{\rm oct}}(p-2)\,q\,\biggl(1 - \frac{(2\pi)^2}{q^2}\biggr)^{3/2} \leq {\rm vol}(W(p,q)) 
             < \bigl(v_{{\rm oct}}(p-3) + 4\,v_{{\rm tet}}) q.
\end{equation*}
\end{theorem}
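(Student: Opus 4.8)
The plan is to establish the two inequalities by essentially independent geometric methods: the upper bound by decomposing the reduced alternating diagram of $W(p,q)$ directly into ideal polyhedra, and the lower bound by realizing $W(p,q)$ as a long Dehn filling of an explicit cusped parent manifold and invoking the Futer--Kalfagianni--Purcell volume estimate for Dehn fillings. Here $v_{{\rm oct}}$ and $v_{{\rm tet}}$ denote the maximal volumes of an ideal octahedron and an ideal tetrahedron. The guiding principle is Thurston's straightening together with Mostow rigidity: the volume of a cusped hyperbolic $3$-manifold is bounded above by the sum, over any ideal polyhedral decomposition, of the maximal volumes of the pieces, and it is computed \emph{exactly} when the decomposition is realized by regular ideal pieces satisfying the gluing and completeness equations.

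For the upper bound I would start from the reduced alternating diagram, the closure of $(\sigma_1\sigma_2^{-1}\cdots\sigma_{p-1}^{\pm1})^q$, which has $(p-1)q$ crossings arranged in $q$ identical periods. By the standard polyhedral decomposition of alternating link complements (Menasco, Lackenby, Agol--Storm--Thurston), each crossing contributes at most one ideal octahedron after straightening, so a crude estimate already gives ${\rm vol}(W(p,q)) \le v_{{\rm oct}}(p-1)q$. The refinement to $\bigl((p-3)v_{{\rm oct}} + 4\,v_{{\rm tet}}\bigr)q$ comes from treating the two outermost strands separately: the $p-3$ interior crossings $\sigma_2,\ldots,\sigma_{p-2}$ of one period each yield an octahedron, while the two boundary crossings $\sigma_1$ and $\sigma_{p-1}^{\pm1}$ close up against the braid axis and the corresponding pieces degenerate from octahedra into a total of $4$ ideal tetrahedra. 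Multiplying the per-period count by $q$ gives the stated strict upper bound.

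For the lower bound I would drill the braid axis $A$ and study $N_{p,q} = S^3 \setminus (W(p,q)\cup A)$, which is the mapping torus of the $q$-th power of the one-period monodromy $\sigma_1\sigma_2^{-1}\cdots\sigma_{p-1}^{\pm1}$ and hence a $q$-fold cyclic cover of the one-period manifold $N_{p,1}$. The crucial input is that $N_{p,q}$ decomposes into regular ideal octahedra, $p-2$ per period (the deficit of $2$ reflecting the two boundary strands; for $p=3$ the manifold $N_{3,1}$ is the Whitehead link complement, a single regular ideal octahedron). This simultaneously proves hyperbolicity and gives ${\rm vol}(N_{p,q}) = v_{{\rm oct}}(p-2)q$. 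Refilling $A$ along its meridian recovers $S^3\setminus W(p,q)$; since the axis cusp grows linearly as the braid wraps $q$ times, the normalized length of this filling slope is at least $q$. As $q \ge 7 > 2\pi$, the slope is longer than $2\pi$ and the Futer--Kalfagianni--Purcell estimate applies, yielding
\begin{equation*}
  {\rm vol}(W(p,q)) \ge \Bigl(1 - \Bigl(\tfrac{2\pi}{q}\Bigr)^{2}\Bigr)^{3/2}\,{\rm vol}(N_{p,q}) = v_{{\rm oct}}\,(p-2)\,q\,\Bigl(1 - \tfrac{(2\pi)^{2}}{q^{2}}\Bigr)^{3/2},
\end{equation*}
which is exactly the claimed lower bound and explains the hypothesis $q \ge 7$.

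The main obstacle is the exact octahedral decomposition of $N_{p,q}$ and its volume: one must produce the regular ideal octahedra explicitly, most cleanly by decomposing the complement of the infinite weave and passing to the appropriate annular quotient and $q$-fold cyclic cover, and then verify Thurston's gluing and completeness equations so that Mostow rigidity identifies this as the complete structure and certifies ${\rm vol}(N_{p,q}) = v_{{\rm oct}}(p-2)q$. The secondary difficulty is the slope-length estimate: bounding the normalized length of the meridian of $A$ below by $q$ requires control of the axis cusp shape, which again follows from the explicit octahedral geometry. By contrast, the upper-bound count is essentially combinatorial once the polyhedral decomposition is in hand, so I expect the lower bound to carry the genuine content of the theorem.
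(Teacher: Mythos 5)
You should know at the outset that the paper under review contains no proof of this statement: it is quoted verbatim as Theorem 1.1 of Champanerkar--Kofman--Purcell \cite{Weaving_vol} and is used only as motivation. So the comparison below is with the proof in that reference. Measured against it, your architecture is essentially the published one: the upper bound from the crossing-by-crossing octahedral decomposition of the alternating diagram, with the two extreme crossings in each period degenerating so that each period contributes $(p-3)$ octahedra and four ideal tetrahedra, giving $\bigl(v_{\rm oct}(p-3)+4\,v_{\rm tet}\bigr)q$ after straightening; and the lower bound by drilling the braid axis $B$, using that $S^3\setminus\bigl(W(p,q)\cup B\bigr)$ is the $q$-fold cyclic cover of the one-period mapping torus, bounding its volume below by $q(p-2)v_{\rm oct}$, and applying the Futer--Kalfagianni--Purcell Dehn-filling theorem to the meridian filling of $B$. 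The hypothesis $q\ge 7$ enters exactly where you put it, via $q>2\pi$.

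There are, however, two genuine problems in your write-up. First, the slope-length step is wrong as stated. The FKP estimate $\bigl(1-(2\pi/\ell)^2\bigr)^{3/2}$ requires the \emph{actual} length of the filling slope on an embedded horoball neighborhood to exceed $2\pi$, not the normalized length; and your claim that the normalized length of the meridian of $B$ is at least $q$ is false --- under the $q$-fold cyclic cover both the meridian length and the cusp area scale by $q$, so the normalized length grows only like $\sqrt{q}$. The correct argument, and the one in \cite{Weaving_vol}, is: every slope on a maximal cusp has length at least $1$, and the meridian of $B$ unwraps $q$ times under the covering, so on the preimage of the maximal cusp of the base manifold its actual length is at least $q>2\pi$ once $q\ge 7$. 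As written, your step does not deliver the stated constant $\bigl(1-(2\pi)^2/q^2\bigr)^{3/2}$.

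Second, you rest the lower bound on an \emph{exact} regular ideal octahedral decomposition of $N_{p,q}$ with ${\rm vol}(N_{p,q})=q(p-2)v_{\rm oct}$, and you correctly flag verifying Thurston's gluing and completeness equations as the main obstacle --- but you leave that unproved, and it is more than the theorem needs. Only the inequality ${\rm vol}(N_{p,q})\ge q(p-2)v_{\rm oct}$ is required, and it can be certified without solving any gluing equations by exhibiting an angle structure assigning the regular octahedral angles $\pi/2$ to the blocks of an explicit decomposition and invoking the Casson--Rivin-type bound that the volume of an angle structure is at most the hyperbolic volume; this is essentially how \cite{Weaving_vol} proceeds, and it simultaneously yields hyperbolicity of the drilled manifold (which you also need for the covering and filling steps). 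Relatedly, your identification of $N_{3,1}$ with the Whitehead link complement should not be asserted without proof: the mapping torus of $\sigma_1\sigma_2^{-1}$ on the $3$-punctured disk is one of the two two-cusped one-octahedron census manifolds (the Whitehead link complement or its sister, the $(-2,3,8)$-pretzel link complement); the ambiguity is harmless for the volume count, since both have volume exactly $v_{\rm oct}$, but it is a gap in the argument as stated.
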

Champanerkar, Kofman, and Purcell call these bounds asymptotically sharp because their ratio approaches 1, 
as $p$ and $q$ tend to infinity.  Since the crossing number of $W(p,q)$ is known to be $(p{-}1)q$, the volume 
bounds in the theorem imply
\begin{equation*}
  \lim_{p,q \to \infty}\frac{{\rm vol}(W(p,q))}{c(W(p,q))} = v_{{\rm oct}} \approx 3.66
\end{equation*}
Their study raised the question of examining the asymptotic behavior of  other invariants of weaving knots.
In this paper we start a study of the asymptotic behavior of Khovanov homology of weaving knots.

Briefly, since weaving knots are alternating knots by definition, we may specialize certain properties of the Khovanov homology of
alternating knots to get started.  This is accomplished in section \ref{Weaving}, where we explain how to calculate the signature of
weaving knots. The second main ingredient in our analysis is the fact that for alternating knots knowing the Jones polynomial is 
equivalent to knowing the Khovanov homology.  How this works explicitly in our examples is explained in section \ref{Jones-to-Khovanov}.
In section \ref{Hecke} we prepare to  follow the development of the Jones polynomial in \cite{Jones_poly86}, starting from 
representations of braid groups into Hecke algebras.  For weaving knots $W(3,n)$, which are naturally represented as the closures 
of braids on three strands, we develop recursive formulas for their representations in the Hecke algebras. These formulas are used 
in computer calculations of the Jones polynomials we need.  Section \ref{JonesPoly} builds on the recursion formulas 
to develop information about the Jones polynomials $V_{W(3,n)}(t)$.
After we explain how to obtain the two-variable Poincar\'{e} polynomial for Khovanov homology in section \ref{Jones-to-Khovanov},
we present the results of calculations in a few relatively small examples.  Our observation is that the distributions of 
dimensions in Khovanov homology resemble normal distributions.  We explore this further in section \ref{Data}, where we present
tables displaying summaries of calculations for weaving knots $W(3,n)$ for selected values of $n$ satisfying $\gcd(3, n) = 1$ 
and ranging up to $n=326$.  The standard deviation $\sigma$ of the normal distribution we attach to the Khovanov homology
of a weaving knot is a significant parameter.  The geometric significance of this number is an open question. 
\section{Generalities on Weaving knots} \label{Weaving}
We have already mentioned that weaving knots are alternating by definition.
Various facts about alternating knots facilitate our calculations of the Khovanov homology of weaving knots $W(3,n)$.
For example, we appeal first to the following theorem of Lee.
\begin{theorem}[Theorem 1.2, \cite{Lee_Endo04}]  \label{locateKHLee}
For any alternating knot $L$ the Khovanov invariants
${\mathcal H}^{i,j}(L)$ 
are supported in two lines
\begin{equation*}
  j = 2i -\sigma(L) \pm 1.  \qed
\end{equation*}
 \end{theorem}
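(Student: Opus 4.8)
The statement is a $\delta$-thinness result: setting $\delta = j - 2i$, it says that $\mathcal{H}^{i,j}(L)$ vanishes unless $\delta \in \{-\sigma(L)-1,\,-\sigma(L)+1\}$, so the homology lives on two adjacent diagonals located by the signature. Since two-diagonal support of the unreduced invariant $\mathcal{H}(L)$ is equivalent to single-diagonal support of the reduced Khovanov homology $\widetilde{\mathcal{H}}(L)$ (the two outer diagonals arising from one inner diagonal by the $q = \pm 1$ grading shift), the plan is to prove that $\widetilde{\mathcal{H}}(L)$ is concentrated on the single diagonal $\delta = -\sigma(L)$, by induction on the crossing number of a reduced alternating diagram $D$ for $L$.

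I would base the induction on two standard inputs, taken with coefficients in a field. First, for any knot $\dim \widetilde{\mathcal{H}}(L) \ge \det(L)$, with equality exactly when $\widetilde{\mathcal{H}}(L)$ is thin; this holds because $\det(L)$ is the absolute value of the Euler characteristic of $\widetilde{\mathcal{H}}(L)$ after collapsing the bigrading to the single grading $\delta$, and the cancellation that would make this Euler characteristic smaller than the total dimension is obstructed precisely by thinness. Second, resolving a crossing yields an unoriented skein long exact sequence relating $\widetilde{\mathcal{H}}(L)$ to the homologies of the two smoothings $L_0$ and $L_1$, from which exactness gives the subadditivity $\dim \widetilde{\mathcal{H}}(L) \le \dim \widetilde{\mathcal{H}}(L_0) + \dim \widetilde{\mathcal{H}}(L_1)$.

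The inductive step then runs as follows. Because $L$ is alternating it admits a crossing at which the two smoothings $L_0$ and $L_1$ are again alternating (more precisely, quasi-alternating) links of strictly smaller complexity whose determinants satisfy the additive relation $\det(L) = \det(L_0) + \det(L_1)$. By the inductive hypothesis $L_0$ and $L_1$ are thin, so $\dim \widetilde{\mathcal{H}}(L_i) = \det(L_i)$, and chaining the two inputs gives
\[
\det(L) \le \dim \widetilde{\mathcal{H}}(L) \le \dim \widetilde{\mathcal{H}}(L_0) + \dim \widetilde{\mathcal{H}}(L_1) = \det(L_0) + \det(L_1) = \det(L).
\]
Equality forces $\dim \widetilde{\mathcal{H}}(L) = \det(L)$, hence $L$ is thin. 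It remains to pin the supporting diagonal at $\delta = -\sigma(L)$, which I would do by checking that the signature of an alternating knot obeys a recursion under the chosen resolution matching the grading shifts in the long exact sequence, with the unknot as base case; the Gordon--Litherland or Goeritz description of $\sigma$ from the checkerboard graph of $D$ supplies this recursion.

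The main obstacle is assembling the combinatorial scaffolding that makes the clean dimension count work: exhibiting, for a reduced alternating diagram, a crossing whose two smoothings have smaller complexity and additive determinant (i.e. confirming that alternating links fit the inductive resolution scheme), and correctly locating the diagonal through the signature. Both require careful grading bookkeeping in the skein sequence — the $q$- and homological-degree shifts depend on the sign of the crossing and on the writhe — and attention to the fact that a smoothing of a knot diagram is generally a link, so component counts and the reduced-versus-unreduced comparison must be tracked. Once the determinant additivity and the signature recursion are verified, the induction closes and yields the two lines $j = 2i - \sigma(L) \pm 1$.
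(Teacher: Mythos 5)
You should first note that the paper does not prove this statement at all: it is quoted verbatim from Lee (Theorem 1.2 of \cite{Lee_Endo04}) with a \qed, and the only related content in the paper is the citation of Lee's signature formula $\sigma(L) = o(D) - y(D) - 1$ (Proposition \ref{basic_signature}), which the authors use to \emph{apply} the theorem to weaving knots. So the relevant comparison is with Lee's published proof, not with anything in this paper. Your route is genuinely different from Lee's: she works directly with the unreduced invariant over $\bQ$ and runs an induction on the crossings of a reduced alternating diagram via the skein long exact sequence, matching the two pairs of lines for the resolutions $L_0$, $L_1$ by a signature recursion derived from the combinatorial formula above. What you propose is instead the later Manolescu--Ozsv\'ath-style quasi-alternating argument: pass to reduced homology, use $\det(L) = \det(L_0) + \det(L_1)$ for a crossing of a reduced alternating diagram (deletion--contraction for spanning trees of the checkerboard graph) together with the rank inequalities $\det \leq \dim \widetilde{{\mathcal H}} \leq \dim \widetilde{{\mathcal H}}(L_0) + \dim \widetilde{{\mathcal H}}(L_1)$. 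That approach buys more generality (it proves thinness for the whole quasi-alternating class, with the alternating case as the seed) at the cost of working over a field where the reduced--unreduced dictionary is clean (it is immediate over $\bF_2$; over $\bQ$, the coefficient field of Lee's statement, you need the reduced--unreduced exact triangle or must run the argument unreduced, as Lee does).

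There is one local error you should repair: the claim that $\dim \widetilde{{\mathcal H}}(L) = \det(L)$ holds \emph{exactly} when $\widetilde{{\mathcal H}}(L)$ is thin. Writing $\delta = j - 2i$, a generator on diagonal $\delta$ contributes the sign $(-1)^{\delta/2}$ to $\pm V_L(-1)$, so adjacent diagonals cancel but diagonals differing by $4$ reinforce; equality $\dim = \det$ therefore only forces the support into a single congruence class of $\delta$ modulo $4$, not onto one diagonal. So your step ``equality forces $\dim \widetilde{{\mathcal H}}(L) = \det(L)$, hence $L$ is thin'' does not follow as stated. The fix is exactly the bookkeeping you defer to the end: once the signature recursion shows that, after the grading shifts in the skein sequence, $\widetilde{{\mathcal H}}(L_0)$ and $\widetilde{{\mathcal H}}(L_1)$ are supported on the \emph{same} diagonal $\delta = -\sigma(L)$, exactness alone traps the middle term on that diagonal, and the equality $\dim = \det$ comes out as a consequence rather than serving as the engine of the induction. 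With the argument reorganized in that order (and the reduced-versus-unreduced and coefficient caveats addressed), your outline closes correctly.
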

We will see that this result also has several practical implications.   For example, to obtain a vanishing result for a particular alternating
knot, it suffices to compute the signature.  Indeed, it turns out that there is a combinatorial formula for the
signature of oriented non-split alternating links. To state the formula requires the following terminology.
\begin{definition} \label{crossings}
  For a link diagram $D$ let $c(D)$ be the number of crossings of $D$, let $x(D)$ be number of
negative crossings, and let $y(D)$ be the number of positive crossings. For an
oriented link diagram, let $o(D)$ be the number
of components of $D(\emptyset)$, the diagram obtained by $A$-smoothing every crossing.
\end{definition}
\vspace{-3em}
\begin{figure}[h!]
  \begin{minipage}[h!]{0.5\linewidth} 
  \centering
\begin{equation*}
\UseComputerModernTips  \xygraph{ !{0;/r3pc/:} 
!{\htwist=>}[rr] 
!{\htwistneg=>}[rr]
}
\end{equation*}   
\caption{Positive and negative crossings}
  \end{minipage}
 \begin{minipage}[h!]{0.5\linewidth}
  \centering
\begin{equation*}
\UseComputerModernTips  \xygraph{ !{0;/r3pc/:} 
!{\huntwist}[rr]
!{\vuntwist}
}
\end{equation*}   
\caption{$A$-smoothing a positive, resp., negative, crossing}
  \end{minipage}
   \label{fig:crossings_smoothings}
\end{figure}
In words, $A$-regions in a neighborhood of a crossing are the regions swept out as the upper strand sweeps
counter-clockwise toward the lower strand.  An $A$-smoothing removes the crossing to connect these regions.
With these definitions,  we may cite the following proposition.
\begin{proposition}[Proposition 3.11, \cite{Lee_Endo04}]  
\label{basic_signature} 
For an oriented non-split alternating link $L$ and a reduced alternating diagram $D$ of $L$, 
$\sigma(L) = o(D) - y(D) -1$. \qed
\end{proposition}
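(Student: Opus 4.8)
The plan is to derive the formula from the Gordon--Litherland formula for the signature of a spanning surface, combined with the classical fact that the Goeritz form of an alternating diagram is definite. First I would fix a checkerboard coloring of the reduced alternating diagram $D$ and let $F$ be the (possibly non-orientable) spanning surface obtained by joining the regions of one color with a half-twisted band at each crossing, choosing the color whose boundary loops constitute the all-$A$ state $D(\emptyset)$, so that the number of those regions is $o(D)$. Gordon and Litherland express $\sigma(L)$ as $\operatorname{sign}(G_F) - \mu(F)$, where $G_F$ is the Goeritz matrix built from these regions and $\mu(F)$ is a correction term counting the crossings of one type (the ``type II'' crossings) with sign. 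The two quantities I must pin down are $\operatorname{sign}(G_F)$ and $\mu(F)$, each expressed in the diagrammatic data $o(D)$, $y(D)$, $x(D)$, $c(D)$.

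Next I would evaluate $\operatorname{sign}(G_F)$. The Goeritz matrix is, up to an overall sign, the reduced Laplacian of the Tait graph weighted by the crossing indices $\eta(c) = \pm 1$. The decisive feature of an alternating diagram is that, relative to a fixed coloring, every crossing has the same index; hence all off-diagonal entries of $G_F$ carry the same sign and $G_F$ is the Laplacian of a connected weighted graph. Such a matrix is semidefinite, and reducedness of $D$ (no nugatory crossings, so the Tait graph forces no degeneracy) upgrades this to definiteness of full rank. Thus $\operatorname{sign}(G_F) = \pm r$, where $r$ is the rank, and $r$ equals the number of these regions minus one. Using the standard fact that for an alternating diagram the loops of $D(\emptyset)$ biject with the regions of the chosen color, together with $s_A + s_B = c(D) + 2$ on $S^2$, I get $r = o(D) - 1$.

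Then I would evaluate the correction term. Because orientation fixes the sign of each crossing and, in an alternating diagram, all crossings share a common type relative to the coloring, the crossings contributing to $\mu(F)$ should be exactly the positive ones, each contributing equally; I expect $\mu(F) = y(D)$ once conventions are aligned. Assembling the two computations then gives $\sigma(L) = \operatorname{sign}(G_F) - \mu(F) = (o(D) - 1) - y(D)$, which is the asserted identity. To be sure the global sign of $\operatorname{sign}(G_F)$ comes out $+1$ and that the correction is $y(D)$ and not $x(D)$, I would calibrate on the three-crossing diagram of the right-handed trefoil, where $o(D) = 2$, $y(D) = 3$, the reduced Laplacian is the positive scalar $[3]$, and indeed $\sigma = 2 - 3 - 1 = -2$.

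The main obstacle is bookkeeping of conventions rather than any deep difficulty: the statement is sensitive to the sign convention for $\sigma$, to the choice of checkerboard color used to build $F$, to the labeling of $A$- versus $B$-smoothings, and to the orientation-dependent notion of a positive crossing. All of these must be chosen consistently so that the definite form contributes $+(o(D)-1)$ and the Gordon--Litherland correction contributes exactly $-y(D)$; a mismatch would instead produce $x(D)$ or an off-by-a-sign version of the formula. Two hypotheses are used essentially and should be tracked: non-splitness keeps the spanning surface and Goeritz form nondegenerate in the relevant sense, while reducedness is precisely what turns semidefiniteness into definiteness and so pins the rank to $o(D) - 1$.
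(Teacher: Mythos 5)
The paper itself gives no proof of this proposition: it is imported verbatim from Lee's paper (her Proposition 3.11, which in turn rests on a combinatorial, essentially inductive argument in the style of Traczyk, tracking how the signature changes when a crossing of a reduced alternating diagram is smoothed). Your Gordon--Litherland route is therefore genuinely different from anything in the paper, and it is a correct and standard way to obtain the formula. Your key structural observations are right and well placed: in an alternating diagram the all-$A$ state loops bound precisely the regions of one checkerboard color, so that color has $o(D)$ regions; because the diagram is alternating, every crossing has the same Goeritz incidence relative to that coloring, so the reduced Goeritz matrix is the reduced Laplacian of a connected positively weighted Tait graph, hence positive definite of rank $o(D)-1$; and the Gordon--Litherland correction then must account for exactly $y(D)$. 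What your approach buys over the inductive one is that it is non-recursive, explains \emph{why} the formula holds (definiteness of the checkerboard form is the structural content of alternation), and extends to inequalities for adequate non-alternating diagrams; the inductive proof avoids the surface and form machinery entirely.

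Two points need tightening. First, the identity $\mu(F)=y(D)$ is the crux of the computation, and calibrating the global sign on a single trefoil does not establish a per-crossing statement: you must do the local check that, with the chosen shading (all incidences $+1$), a positive crossing is of Gordon--Litherland type II (contributing $+1$ to $\mu$) while a negative crossing is of type I (contributing $0$). This is a routine two-case verification of local pictures, and since every crossing of the alternating diagram looks locally the same up to the two possible orientations, it does complete the proof --- but as written you have asserted it rather than proved it. Second, your attribution of definiteness to reducedness is off: the reduced Laplacian of a \emph{connected} graph is positive definite by Kirchhoff's theorem, loops and all, and connectivity is supplied by non-splitness (a reduced alternating diagram of a non-split link is connected). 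A nugatory crossing appears as a loop edge of the Tait graph and simply drops out of the Goeritz form, so reducedness is not what rescues definiteness; its role is in matching the hypotheses of Lee's statement and the bookkeeping of the crossing counts, not in upgrading semidefiniteness.
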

We now use this result to compute the signatures of weaving knots. 
For a knot or link $W(m, n)$ drawn in the usual way,  the number of crossings $c(D) = (m{-}1)n$.
In particular, for $W(2k{+}1,n)$, $c\bigl(W(2k{+}1,n)\bigr)= 2kn$;   for $W(2k, n)$, $c\bigl( W(2k, n) \bigr) = (2k{-}1)n$. 
Evaluating the other quantities in definition \ref{crossings}, we calculate the signatures of weaving knots. 
\begin{proposition} \label{weavingsignature}
  For a weaving knot $W(2k{+}1,n)$, $\sigma\bigl( W(2k{+}1,n) \bigr) = 0$,  
and for $W(2k, n)$,   $\sigma\bigl( W(2k, n) \bigr) =  -n{+}1$.
\end{proposition}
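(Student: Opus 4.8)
The plan is to apply Proposition \ref{basic_signature}, which reduces the problem to the two combinatorial quantities $o(D)$ and $y(D)$ for the standard alternating diagram $D$ of each weaving knot; the crossing numbers $c(D)$ are already recorded above. First I would read off the $A$-smoothing of each crossing from Definition \ref{crossings} and its accompanying figure: the $A$-smoothing of a \emph{positive} crossing leaves the two strands passing straight through (the identity tangle), while the $A$-smoothing of a \emph{negative} crossing reconnects them into a turnback (the strand coming in from one side turns back to that same side). In the standard braid diagram of $W(m,n)$ the generator $\sigma_i$ carries the exponent $(-1)^{i+1}$, so the positive crossings are exactly the odd-indexed generators $\sigma_1,\sigma_3,\dots$ and the negative crossings the even-indexed ones. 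Counting the odd-indexed generators in one braid block and multiplying by $n$ gives $y(D)=kn$ for both $W(2k{+}1,n)$ and $W(2k,n)$.

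The substance is the count of $o(D)$, the number of loops in the all-$A$ state. In that state every positive crossing becomes an identity tangle and every negative crossing a turnback, so after erasing the (now trivial) positive crossings the all-$A$ state is governed entirely by the turnbacks, which sit on the disjoint strand pairs cut out by the negative generators. For $W(2k{+}1,n)$ the negative generators are $\sigma_2,\sigma_4,\dots,\sigma_{2k}$, giving turnback pairs $(2,3),(4,5),\dots,(2k,2k{+}1)$ and leaving strand $1$ unpaired; for $W(2k,n)$ they are $\sigma_2,\sigma_4,\dots,\sigma_{2k-2}$, giving pairs $(2,3),(4,5),\dots,(2k{-}2,2k{-}1)$ and leaving both strand $1$ and strand $2k$ unpaired. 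The key local picture is that on a fixed turnback pair the turnbacks in consecutive braid blocks are separated by straight (identity) strands, so the outgoing arc of one turnback closes up with the incoming arc of the next to form a single loop; running around the braid closure this yields exactly $n$ loops per turnback pair, while each unpaired strand, meeting only identity tangles, closes into a single loop.

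Assembling the count, $W(2k{+}1,n)$ has $k$ turnback pairs and one unpaired strand, so $o(D)=kn+1$ and $\sigma\bigl(W(2k{+}1,n)\bigr)=(kn+1)-kn-1=0$; and $W(2k,n)$ has $k{-}1$ turnback pairs and two unpaired strands, so $o(D)=(k{-}1)n+2$ and $\sigma\bigl(W(2k,n)\bigr)=\bigl((k{-}1)n+2\bigr)-kn-1=-n+1$, as claimed.

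The step I expect to be delicate is the determination of $o(D)$, in two respects. First, one must fix the smoothing convention correctly: interchanging which smoothing is the $A$-smoothing swaps identity with turnback and changes the answer, so I would verify the convention against the classical value on the smallest even case $W(2,n)=T(2,n)$, whose signature $1-n$ is well known and whose all-$A$ state (all identity smoothings) visibly has two loops. Second, I must justify that distinct turnback pairs never interact and that consecutive turnbacks on a given pair really are separated by identity strands, which is what makes the clean ``$n$ loops per pair, one loop per unpaired strand'' bookkeeping valid; this is precisely where the parity of $m$ enters, through whether the outermost and innermost strands are swept into a turnback pair or left unpaired.
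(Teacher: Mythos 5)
Your proposal is correct and follows essentially the same route as the paper: both invoke Lee's formula $\sigma(L)=o(D)-y(D)-1$, count $y(D)=kn$ from the alternating signs of the braid generators, and compute $o(D)$ from the all-$A$ state in which positive crossings smooth to identity tangles and negative crossings to turnbacks, yielding $n$ circles per negative-generator pair plus one circle per unpaired boundary strand ($o=kn+1$, resp.\ $o=(k{-}1)n+2$). Your ``turnback pairs $(2,3),(4,5),\dots$'' bookkeeping is just the braid-word phrasing of the paper's ring-by-ring count of circles in the smoothed annular diagram, and your $W(2,n)=T(2,n)$ sanity check of the smoothing convention is a sensible extra precaution rather than a new idea.
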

\begin{proof}
  Consider first the example $W(3,n)$, illustrated by figures \ref{fig:w34} and \ref{fig:w34a} for $W(3,4)$ drawn below.
After $A$-smoothing the diagram, the outer ring of crossings produces a circle bounding the 
rest of the smoothed diagram.
On the inner ring of crossings the $A$-smoothings produce $n$ circles in a cyclic arrangement.  Therefore
$o\bigl( W(3,n) \bigr) = 1 + n$.  The outer ring of crossings consists of positive crossings
and the inner ring of crossings consists of negative crossings, so $x(D) = y(D) = n$.  
Applying the formula of theorem \ref{basic_signature}, we obtain the result $\sigma\bigl( W(3,n) \bigr)=0$.  

For the general case $W(2k{+}1, n)$, we have the following considerations.
The crossings are organized into  $2k$ rings.  Reading from the outside toward the center, we have
first a ring of positive crossings, then a ring of negative crossings, and so on, alternating
positive and negative.  Thus $y(D) = kn$.  Considering the $A$-smoothing of the diagram of $W(2k{+}1,n)$,
as in the special case, a bounding circle appears from the smoothing of the outer ring.  
A chain of $n$ disjoint smaller circles appears inside the second ring.  No circles appear in the third ring, nor in any odd-numbered ring 
thereafter.  On the other hand, chains of $n$ disjoint smaller circles appear in each even-numbered ring.  Since there are $k$ even-numbered
rings, we have $o(D) = 1 + kn$. 
Applying the formula of proposition \ref{basic_signature}
\begin{equation*}
  \sigma\bigl( W(2k{+}1, n)\bigr)  = o(D) - y(D) -1 = (1+kn) - kn -1 = 0.
\end{equation*}
These figures illustrate the main points of the $W(3,n)$-cases, and, as explained above, the main points of the $W(2k{+}1, n)$-cases.
\begin{figure}[h!]
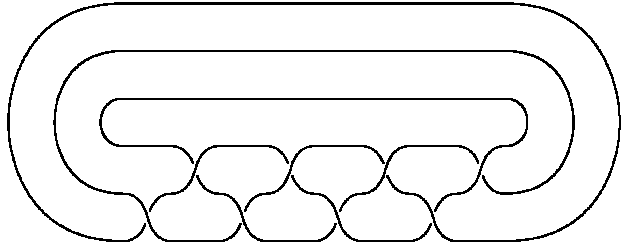

  \centering
\begin{equation*}
\xygraph{  !{0;/r1.5pc/:}
!{\hcap}[u]
!{\hcap[3]}[u]
!{\hcap[5]}[llllllll]
!{\xcaph[-8]@(0)}[dl]
!{\xcaph[-8]@(0)}[dl]
!{\xcaph[-8]@(0)}[uul]
!{\hcap[-5]}[d]
!{\hcap[-3]}[d]
!{\hcap[-1]}[d]
!{\xcaph[1]@(0)}[dl]
!{\htwist}[d]
!{\xcaph[1]@(0)}[uul]
!{\htwistneg}
!{\xcaph[1]@(0)}[dl]
!{\htwist}[d]
!{\xcaph[1]@(0)}[uul]
!{\htwistneg}
!{\xcaph[1]@(0)}[dl]
!{\htwist}[d]
!{\xcaph[1]@(0)}[uul]
!{\htwistneg}
!{\xcaph[1]@(0)}[dl]
!{\htwist}[d]
!{\xcaph[1]@(0)}[uul]
!{\htwistneg}
}   
\end{equation*}   \vspace{-9em}
  \caption{The weaving knot $W(3,4)$}
  \label{fig:w34}
\end{figure}
\begin{figure}[h!]
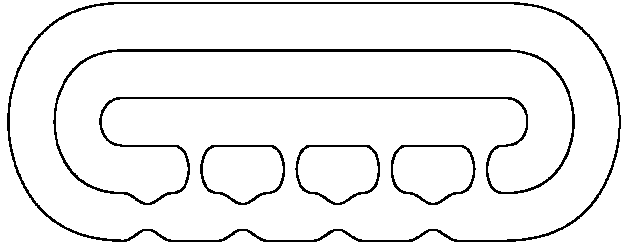

  \centering
\begin{equation*}  
  \xygraph{  !{0;/r1.5pc/:}
!{\hcap}[u]
!{\hcap[3]}[u]
!{\hcap[5]}[llllllll]
!{\xcaph[-8]@(0)}[dl]
!{\xcaph[-8]@(0)}[dl]
!{\xcaph[-8]@(0)}[uul]
!{\hcap[-5]}[d]
!{\hcap[-3]}[d]
!{\hcap[-1]}[d]
!{\xcaph[1]@(0)}[dl]
!{\huntwist}[u]
!{\huncross}[ddl]
!{\xcaph[1]@(0)}[uu]
!{\xcaph[1]@(0)}[dl]
!{\huntwist}[u]
!{\huncross}[ddl]
!{\xcaph[1]@(0)}[uu]
!{\xcaph[1]@(0)}[dl]
!{\huntwist}[u]
!{\huncross}[ddl]
!{\xcaph[1]@(0)}[uu]
!{\xcaph[1]@(0)}[dl]
!{\huntwist}[u]
!{\huncross}[ddl]
!{\xcaph[1]@(0)}[uu]
}
\end{equation*}  \vspace{-9em}
  \caption{The $A$-smoothing of $W(3,4)$}
  \label{fig:w34a}
\end{figure}

For the case $W(2k, n)$,  we show  $W(4,5)$ below in figures \ref{fig:w45} and \ref{fig:w45a} as an example. 
Our standard diagram may be organized into $2k{-}1$ rings of crossings.  
In each ring there are $n$ crossings, so the total number of crossings is $c(D) = (2k{-}1)n$. 
In our standard representation, there is an outer ring of $n$ positive crossings, next a ring of $n$ negative crossings, alternating 
until we end with an innermost ring of $n$ positive crossings.  There are thus $k$ rings of $n$ positive crossings and $k{-}1$ rings
of $n$ negative crossings.  Therefore,  $y(D) = kn$  and  $x(D) = (k{-}1)n$.
 Considering the $A$-smoothing of the diagram, a bounding circle appears from the smoothing of the outer ring.  As before, a chain of $n$ disjoint
smaller circles appears inside the second ring and in each successive even-numbered ring.  As previously noted, there are $k{-}1$ of these
rings.  No circles appear in odd-numbered rings, until we reach the last ring, where an inner bounding circle appears. 
Thus, $o(D) = 1 + (k{-}1)n + 1 = (k{-}1)n + 2$. 
Consequently, 
\begin{equation*}
    \sigma\bigl( W(2k, n)\bigr)  = o(D) - y(D) -1 = \bigl((k{-}1)n + 2\bigr) - kn - 1
                    = -n{+}1.  \qedhere
\end{equation*}
\end{proof}
\begin{figure}[h!]
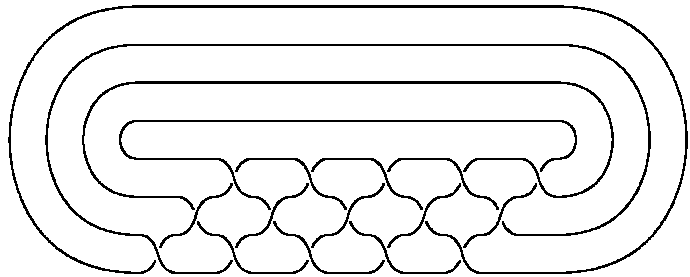

  \centering
\begin{equation*}
   \xygraph{  !{0;/r1.2pc/:}
!{\hcap}[u]
!{\hcap[3]}[u]
!{\hcap[5]}[u]
!{\hcap[7]}[lllllllllll]
!{\xcaph[-11]@(0)}[dl]
!{\xcaph[-11]@(0)}[dl]
!{\xcaph[-11]@(0)}[dl]
!{\xcaph[-11]@(0)}[uuul]
!{\hcap[-7]}[d]
!{\hcap[-5]}[d]
!{\hcap[-3]}[d]
!{\hcap[-1]}[d]
!{\xcaph[2]@(0)}[dl]
!{\xcaph[1]@(0)}[dl]
!{\htwist}[d]
!{\xcaph[1]@(0)}[uul]
!{\htwistneg}[u]
!{\htwist}[ddl]
!{\htwist}[d]
!{\xcaph[1]@(0)}[uul]
!{\htwistneg}[ul]
!{\xcaph[1]@(0)}
!{\htwist}[ddl]
!{\htwist}[d]
!{\xcaph[1]@(0)}[uul]
!{\htwistneg}[ul]
!{\xcaph[1]@(0)}
!{\htwist}[ddl]
!{\htwist}[d]
!{\xcaph[1]@(0)}[uul]
!{\htwistneg}[ul]
!{\xcaph[1]@(0)}
!{\htwist}[ddl]
!{\htwist}[d]
!{\xcaph[2]@(0)}[uul]
!{\htwistneg}[ul]
!{\xcaph[1]@(0)}
!{\htwist}[ddl]
!{\xcaph[1]@(0)}
}
\end{equation*} \vspace{-9em}
  \caption{The weaving knot $W(4,5)$}
  \label{fig:w45}
\end{figure}
\begin{figure}[h!]
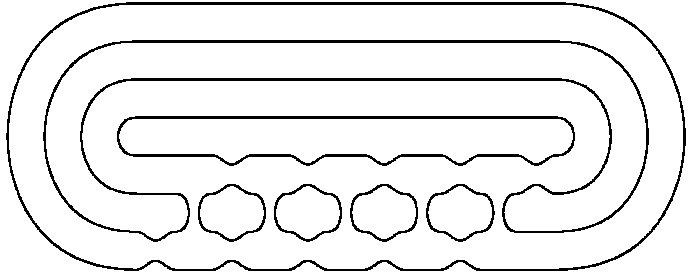

  \centering
\begin{equation*}  
  \xygraph{ !{0;/r1.2pc/:} 
!{\hcap}[u]
!{\hcap[3]}[u]
!{\hcap[5]}[u]
!{\hcap[7]}[lllllllllll]
!{\xcaph[-11]@(0)}[dl]
!{\xcaph[-11]@(0)}[dl]
!{\xcaph[-11]@(0)}[dl]
!{\xcaph[-11]@(0)}[uuul]
!{\hcap[-7]}[d]
!{\hcap[-5]}[d]
!{\hcap[-3]}[d]
!{\hcap[-1]}[d]
!{\xcaph[2]@(0)}[dl]
!{\xcaph[1]@(0)}[dl]
!{\huntwist}[d]
!{\xcaph[1]@(0)}[uul]
!{\huncross}[u]
!{\huntwist}[ddl]
!{\huntwist}[d]
!{\xcaph[1]@(0)}[uul]
!{\huncross}[ul]
!{\xcaph[1]@(0)}
!{\huntwist}[ddl]
!{\huntwist}[d]
!{\xcaph[1]@(0)}[uul]
!{\huncross}[ul]
!{\xcaph[1]@(0)}
!{\huntwist}[ddl]
!{\huntwist}[d]
!{\xcaph[1]@(0)}[uul]
!{\huncross}[ul]
!{\xcaph[1]@(0)}
!{\huntwist}[ddl]
!{\huntwist}[d]
!{\xcaph[2]@(0)}[uul]
!{\huncross}[ul]
!{\xcaph[1]@(0)}
!{\huntwist}[ddl]
!{\xcaph[1]@(0)}
}
\end{equation*} \vspace{-9em}  
  \caption{The $A$-smoothing of $W(4,5)$}
  \label{fig:w45a}
\end{figure}
\begin{theorem}
  \label{locateKH}
For a weaving knot $W(2k{+}1,n)$ the non-vanishing Khovanov homology ${\mathcal H}^{i,j}\bigl( W(2k{+}1, n) \bigr)$ lies on the 
lines
\begin{equation*}
  j = 2i \pm 1.
\end{equation*}
For a weaving knot $W(2k, n)$ the non-vanishing Khovanov homology ${\mathcal H}^{i,j}\bigl( W(2k, n) \bigr)$ lies on the lines
\begin{equation*}
  j = 2i + n -1 \pm 1
\end{equation*}
\end{theorem}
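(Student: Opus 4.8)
The plan is to apply Lee's support theorem (Theorem \ref{locateKHLee}) directly, substituting the signature values already established in Proposition \ref{weavingsignature}. Since weaving knots are alternating by definition, Theorem \ref{locateKHLee} applies verbatim and tells us that the non-vanishing Khovanov homology $\mathcal{H}^{i,j}\bigl(W(p,n)\bigr)$ is concentrated on the two lines $j = 2i - \sigma\bigl(W(p,n)\bigr) \pm 1$. Everything then reduces to inserting the correct signature in each parity case.

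For the odd case $W(2k{+}1,n)$, Proposition \ref{weavingsignature} gives $\sigma\bigl(W(2k{+}1,n)\bigr) = 0$, so the two support lines specialize to $j = 2i - 0 \pm 1 = 2i \pm 1$, which is exactly the first assertion. For the even case $W(2k,n)$, the same proposition gives $\sigma\bigl(W(2k,n)\bigr) = -n{+}1$, and substituting yields $j = 2i - (-n{+}1) \pm 1 = 2i + n - 1 \pm 1$, matching the second assertion. Thus the proof is carried out simply by reading off the two signature values and performing the substitution.

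I would emphasize that the genuine mathematical content of this theorem lives entirely in the signature computation of Proposition \ref{weavingsignature}, which itself rests on the combinatorial formula $\sigma(L) = o(D) - y(D) - 1$ of Proposition \ref{basic_signature} together with the careful enumeration of the smoothing circles $o(D)$ and positive crossings $y(D)$ ring by ring. Once those counts are secured, the present statement is a purely formal corollary. Consequently I anticipate no real obstacle here beyond correctly tracking the sign in $-\sigma = n-1$ for the even family; the only place an error could creep in is a dropped minus sign when converting $\sigma\bigl(W(2k,n)\bigr) = -n{+}1$ into the shift $-\sigma = n-1$ appearing in the support lines.
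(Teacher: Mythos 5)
Your proof is correct and is exactly the paper's argument: the paper's own proof consists of the single line ``substitute the calculations made in Proposition \ref{weavingsignature} into the formula of Theorem \ref{locateKHLee},'' which is precisely your substitution of $\sigma\bigl(W(2k{+}1,n)\bigr)=0$ and $\sigma\bigl(W(2k,n)\bigr)=-n{+}1$ into $j=2i-\sigma(L)\pm 1$. Your sign-tracking in the even case, $-\sigma = n-1$, matches the stated conclusion, so nothing further is needed.
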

\begin{proof}
  Substitute the calculations made in lemma \ref{weavingsignature} into
  the formula of theorem \ref{locateKHLee}.
\end{proof}
\section{Recursion in the Hecke algebra} \label{Hecke}
We review briefly the definition of the Hecke algebra $H_{N+1}$ on generators $T_1$ through $T_N$,
and we define the representation of the braid group $B_3$ on three strands in $H_3$. 
Theorem \ref{heckerecursion} sets up recursion relations for the coefficients in the expansion 
of the image in $H_3$ of the braid $(\sigma_1 \sigma_2^{-1})^{n}$,
whose closure is the weaving knot $W(3,n)$.  The recursion relations are  essential for automating 
the calculation of the Jones polynomial for the knots $W(3,n)$.  
Proposition \ref{C121} uses these relations developed in theorem \ref{heckerecursion}
to prove a vanishing result for one of the coefficients.  Being able to ignore one of 
the coefficients speeds up the computations slightly.
\begin{definition}
  \label{Heckealgebras}
Working  over the ground field $K$ containing an element $q \neq 0$, the Hecke
algebra $H_{N+1}$ is the associative algebra with $1$ on generators $T_1$, \ldots, $T_N$ satisfying these relations.
\begin{align}
  T_iT_j &= T_jT_i, \quad \text{whenever $\abs{i-j} \geq 2$,}  \label{commutativity}
\\
  T_iT_{i+1}T_i &= T_{i+1}T_iT_{i+1}, \quad \text{for $1 \leq i \leq N{-}1$,} \label{interchange} 
\intertext{and,  finally,}
  T_i^2 &= (q{-}1)T_i + q,\quad  \text{for all $i$.} \label{inverse}
\end{align}
It is well-known 
\cite{Jones_poly86}
that  $(N{+}1)!$ is the dimension of $H_{N+1}$ over $K$.
\end{definition}
Recasting the relation 
$T_i^2 = (q{-}1)T_i + q$ in the form $q^{-1} \bigl(T_i - (q-1)\bigr)\cdot T_i = 1$ shows that 
$T_i$ is invertible in $H_{N+1}$ with 
$T_i^{-1} = q^{-1}\bigl( T_i - (q-1) \bigr)$.
Consequently, the specification
$\rho(\sigma_i)  = T_i$, combined with relations \eqref{commutativity} and \eqref{interchange}, defines  a homomorphism 
$\rho \colon B_{N+1} \ra H_{N+1} $ from $B_{N+1}$, the group of braids on $N{+}1$ strands,
 into the multiplicative monoid of $H_{N+1}$. 

For work in $H_3$, choose the  ordered basis
$\{1, T_1, T_2, T_1T_2, T_2T_1, T_1T_2T_1\}$.
The word in the Hecke algebra corresponding to the knot $W(3,n)$ is formally
\begin{equation}
  \label{eq:basicw3n}
 \rho\bigl( (T_1T_2^{-1})^n\bigr)  
        = q^{-n}\bigl(C_{n,0}+ C_{n,1}\cdot T_1 + C_{n,2} \cdot T_2 + C_{n,12} \cdot T_1T_2  
               + C_{n,21} \cdot T_2T_1  + C_{n,121} \cdot T_1T_2T_1 \bigr),
\end{equation}
where the coefficients  $C_{n,*} = C_{n,*}(q)$ of the monomials in $T_1$ and $T_2$ are  polynomials in $q$.
For $n = 1$, 
  \begin{equation*}
  \rho ( \sigma_1 \sigma_2^{-1}) = T_1T_2^{-1} = q^{-1}\cdot\bigl(  T_1 ( -(q{-}1) + T_2)  \bigr) 
  = q^{-1}\bigl( -(q{-}1)\cdot T_1 + T_1T_2 \bigr),
  \end{equation*}
so we have initial values 
\begin{equation}  \label{initialCs}
  C_{1,0}(q) = 0, \; C_{1,1}(q) = -(q{-}1), \; C_{1,2}(q) = 0, \; C_{1,12}(q) = 1,
                   \;  C_{1,21}(q) = 0, \; \text{and} \; C_{1,121}(q) = 0.
\end{equation}
\begin{theorem}
  \label{heckerecursion}
These polynomials satisfy the following recursion relations.
\begin{align}
C_{n,0}(q) &= q^2\cdot C_{n-1,21}(q) - q(q{-}1)\cdot C_{n-1,1}(q)  \label{cn0}
\\
  C_{n,1}(q) &= - (q{-}1)^2\cdot C_{n-1,1}(q) - (q{-}1)\cdot C_{n-1,0}(q) + q^2\cdot C_{n-1,121}(q) \label{cn1}
\\
  C_{n,2}(q) &=  q\cdot C_{n-1,1}(q)  \label{cn2}
\\
 C_{n,12}(q)  &= (q{-}1)\cdot C_{n-1,1}(q) + C_{n-1,0}(q)  \label{cn12}
\\
\begin{split}
  C_{n, 21}(q) &= -(q{-}1)\cdot C_{(n-1),2}(q) + q\cdot C_{n-1,12}(q)\\ 
  & \hspace{3em} - (q{-}1)^2\cdot C_{n-1,21}(q)  + q(q{-}1)\cdot C_{n-1, 121}(q) \label{cn21}
\end{split}
\\
C_{n,121}(q) &= C_{n-1,2}(q) + (q{-}1)\cdot C_{n-1,21}(q)  \label{cn121}
\end{align}
\end{theorem}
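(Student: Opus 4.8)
The plan is to set up the recursion by right-multiplying the degree-$(n{-}1)$ word by a single factor of $T_1T_2^{-1}$ and then re-expressing the product in the chosen basis $\{1, T_1, T_2, T_1T_2, T_2T_1, T_1T_2T_1\}$. Writing the bracketed element in \eqref{eq:basicw3n} as $X_m$, so that $\rho\bigl((T_1T_2^{-1})^m\bigr) = q^{-m}X_m$, the identity $(T_1T_2^{-1})^n = (T_1T_2^{-1})^{n-1}(T_1T_2^{-1})$ combined with $T_2^{-1} = q^{-1}\bigl(T_2 - (q{-}1)\bigr)$ from relation \eqref{inverse} gives $T_1T_2^{-1} = q^{-1}\bigl(T_1T_2 - (q{-}1)T_1\bigr)$. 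The single factor of $q^{-1}$ here exactly absorbs the shift from $q^{-(n-1)}$ to $q^{-n}$, leaving the clean recursion $X_n = X_{n-1}T_1T_2 - (q{-}1)\,X_{n-1}T_1$. Thus everything reduces to understanding right multiplication by $T_1$ and by $T_2$ on the six basis monomials.

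First I would tabulate right multiplication by $T_1$. The only reductions needed are the quadratic relation \eqref{inverse}, applied to $T_1\cdot T_1$, to $T_2T_1\cdot T_1 = (q{-}1)T_2T_1 + q\,T_2$, and to $T_1T_2T_1\cdot T_1 = (q{-}1)T_1T_2T_1 + q\,T_1T_2$; the products $1\cdot T_1$, $T_2\cdot T_1$, and $T_1T_2\cdot T_1$ already land on basis monomials. Applying this table termwise to $X_{n-1}$ yields an intermediate element $Z = X_{n-1}T_1$ whose six coordinates are explicit linear combinations of the $C_{n-1,*}$, for example coefficient of $1$ equal to $q\,C_{n-1,1}$ and coefficient of $T_1$ equal to $C_{n-1,0} + (q{-}1)C_{n-1,1}$.

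Next I would tabulate right multiplication by $T_2$. Here the nontrivial simplifications are the braid relation \eqref{interchange}, which gives $T_2T_1\cdot T_2 = T_2T_1T_2 = T_1T_2T_1$, and the combined reduction $T_1T_2T_1\cdot T_2 = T_1(T_2T_1T_2) = T_1(T_1T_2T_1) = T_1^2 T_2T_1 = (q{-}1)T_1T_2T_1 + q\,T_2T_1$, together with the straightforward $T_1T_2\cdot T_2 = (q{-}1)T_1T_2 + q\,T_1$. Applying this table to $Z$ produces $ZT_2 = X_{n-1}T_1T_2$; then forming $X_n = ZT_2 - (q{-}1)Z$ and collecting the coordinate of each basis monomial reproduces precisely \eqref{cn0}--\eqref{cn121}. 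As a consistency check, feeding the initial data \eqref{initialCs} at $n=1$ through the recursion should recover $\rho\bigl((T_1T_2^{-1})^2\bigr)$ computed directly.

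The entire argument rests on the closure of the six-monomial set under these two right multiplications, which is guaranteed because they form a $K$-basis of the six-dimensional algebra $H_3$. I therefore expect no structural obstacle; the only place where genuine care is required is the pair of reductions $T_2T_1T_2 = T_1T_2T_1$ and $T_1T_2T_1\cdot T_2$, where the braid relation \eqref{interchange} and the quadratic relation \eqref{inverse} must be invoked in the correct order. Everything else is bookkeeping, so the main practical task is simply to organize the coefficient collection carefully and to track the single overall factor of $q^{-1}$ coming from $T_2^{-1}$.
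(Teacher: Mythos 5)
Your proposal is correct and follows essentially the same route as the paper: right-multiply the expansion of $\rho\bigl((T_1T_2^{-1})^{n-1}\bigr)$ by $T_1T_2^{-1} = q^{-1}\bigl(T_1T_2 - (q{-}1)T_1\bigr)$, absorb the $q^{-1}$ into $q^{-n}$, reduce everything to the six-element basis using the braid relation \eqref{interchange} and the quadratic relation \eqref{inverse}, and collect coefficients. The only difference is organizational bookkeeping --- the paper expands the longer words such as $T_2T_1^2T_2$ and $T_1T_2T_1^2T_2$ in a single pass, while you factor the multiplication through the intermediate element $Z = X_{n-1}T_1$ --- and your stated reductions and sample coordinates (e.g.\ the coefficients $q\,C_{n-1,1}$ of $1$ and $C_{n-1,0}+(q{-}1)C_{n-1,1}$ of $T_1$ in $Z$) are all accurate and reproduce \eqref{cn0}--\eqref{cn121} exactly.
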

\begin{proof}
  We have
  \begin{multline} \label{exp0}
    \rho( T_1T_2^{-1} )^n =  \rho( T_1T_2^{-1} )^{n-1} \cdot \rho (T_1T_2^{-1}) 
\\
     =   q^{-n}\bigl(C_{n-1,0}+ C_{n-1,1}\cdot T_1 + C_{n-1,2} \cdot T_2 + C_{n-1,12} \cdot T_1T_2  
               + C_{n-1,21} \cdot T_2T_1  + C_{n-1,121}\cdot T_1T_2T_1 \bigr)
                 \\  \cdot \bigl( -(q{-}1)\cdot T_1 + T_1T_2 \bigr)
\\
    =  q^{-n}\biggl( -(q{-}1)C_{n-1,0}\cdot T_1  -(q{-}1)C_{n-1,1}\cdot T_1^2   -(q{-}1)C_{n-1,2} \cdot T_2T_1 
\\
                -(q{-}1)C_{n-1,12} \cdot T_1T_2T_1   -(q{-}1)C_{n-1,21} \cdot T_2T_1^2   -(q{-}1)C_{n-1,121}\cdot T_1T_2T_1^2
\\
          + C_{n-1,0} \cdot T_1T_2+ C_{n-1,1}\cdot T_1^2T_2 + C_{n-1,2} \cdot T_2T_1T_2
\\
           + C_{n-1,12} \cdot T_1T_2T_1T_2   + C_{n-1,21} \cdot T_2T_1^2T_2  + C_{n-1,121}\cdot T_1T_2T_1^2T_2 \biggr)
\\ 
   = q^{-n}\biggl( \Bigl( -(q{-}1)C_{n-1,0}\cdot T_1  -(q{-}1)C_{n-1,2} \cdot T_2T_1  -(q{-}1)C_{n-1,12} \cdot T_1T_2T_1 +  C_{n-1,0} \cdot T_1T_2\Bigr)
\\
  + \Bigl\{-(q{-}1)C_{n-1,1}\cdot T_1^2 -(q{-}1)C_{n-1,21} \cdot T_2T_1^2 -(q{-}1)C_{n-1,121}\cdot T_1T_2T_1^2 +  C_{n-1,1}\cdot T_1^2T_2
\\
     +C_{n-1,2} \cdot T_2T_1T_2  + C_{n-1,12} \cdot T_1T_2T_1T_2   + C_{n-1,21} \cdot T_2T_1^2T_2  + C_{n-1,121}\cdot T_1T_2T_1^2T_2 \Bigr\} \biggr)
  \end{multline}
after collecting powers of $q$ and expanding.  In the last grouping, the first four terms inside the parentheses $( \ )$ 
involve only elements of the preferred basis;
the second eight terms in the pair of braces $\{\ \}$ all require further expansion, as follows. 
\begin{align}
 \begin{split}  -(q{-}1)C_{n-1,1}\cdot T_1^2 &= -(q{-}1)C_{n-1,1}\cdot ((q-1)T_1 + q) 
\\   &= -(q-1)^2C_{n-1,1}\cdot T_1 - q(q-1)C_{n-1,1} \end{split} \label{expa}
\\
 \begin{split}  -(q{-}1)C_{n-1,21} \cdot T_2T_1^2 &= -(q{-}1)C_{n-1,21} \cdot T_2((q-1)T_1 + q) 
\\    &= -(q-1)^2C_{n-1,21}\cdot T_2T_1 - q(q-1)C_{n-1,21} \cdot T_2  \end{split} \label{expb}
\\
\begin{split} -(q{-}1)C_{n-1,121}\cdot T_1T_2T_1^2 &= -(q{-}1)C_{n-1,121} \cdot T_1T_2((q-1)T_1 + q) 
\\    &   = -(q-1)^2C_{n-1,121}\cdot T_1T_2T_1 - q(q-1)C_{n-1,121} \cdot T_1T_2 \end{split} \label{expc}
\\
\begin{split} C_{n-1,1}\cdot T_1^2T_2 &= C_{n-1,1}\cdot( (q{-}1) T_1 + q) T_2 
\\    &= (q{-}1)C_{n-1,1}\cdot T_1T_2 + qC_{n-1,1} \cdot T_2
\end{split} \label{expd}
\\
 C_{n-1,2} \cdot T_2T_1T_2 &= C_{n-1,2} \cdot T_1T_2T_1 \label{expe}
\\
\begin{split}   
  C_{n-1,12} \cdot T_1T_2T_1T_2   &=  C_{n-1,12}\cdot T_1^2T_2T_1 = C_{n-1,12}((q{-}1)T_1 + q)T_2T_1
\\ &= (q{-}1)C_{n-1,12}\cdot T_1T_2T_1 + qC_{n-1,12}\cdot T_2T_1
\end{split} \label{expf}
\\
\begin{split}
  C_{n-1,21} \cdot T_2T_1^2T_2  &= C_{n-1,21}\cdot T_2 ((q{-}1)T_1 + q) T_2 = (q{-}1)C_{n-1,21} \cdot T_2T_1T_2 + qC_{n-1,21} \cdot T_2^2
\\ &= (q{-}1)C_{n-1,21} \cdot T_1T_2T_1 + qC_{n-1,21}\cdot ((q{-}1)T_2 + q) 
\\ &=  (q{-}1)C_{n-1,21} \cdot T_1T_2T_1 + q(q{-}1)C_{n-1,21}\cdot T_2 + q^2C_{n-1,21})
\end{split} \label{expg}
\\
\begin{split}
 C_{n-1,121}\cdot T_1T_2T_1^2T_2  &=  C_{n-1,121} \cdot T_1T_2((q{-}1)T_1 + q)T_2 
\\ &= (q{-}1)C_{n-1,121} \cdot T_1T_2T_1T_2 + qC_{n-1,121} \cdot T_1T_2^2
 \\ &= (q{-}1)C_{n-1,121} \cdot T_1^2T_2T_1 + qC_{n-1,121}\cdot T_1((q{-}1)T_2+ q)
\\ &= (q{-}1)C_{n-1,121} \cdot ((q{-}1)T_1+ q)T_2T_1 + qC_{n-1,121}\cdot T_1((q{-}1)T_2+ q)
\\ &= (q{-}1)^2C_{n-1,121}\cdot T_1T_2T_1 + q (q{-}1)C_{n-1,121}\cdot T_2T_1 
\\  & \hspace{3em} + q(q{-}1)C_{n-1,121}\cdot T_1T_2+ q^2C_{n-1,121} \cdot T_1
\end{split} \label{exph}
\end{align}
Collecting the constant terms from \eqref{expa} and \eqref{expg}, we get
\begin{align*}
  C_{n,0} &=  - q(q-1)C_{n-1,1} + q^2C_{n-1,21}.
\intertext{Collecting coefficients of $T_1$ from \eqref{exp0}, \eqref{expa}, \eqref{exph}, we get}
  C_{n,1} &=  -(q{-}1)C_{n-1,0} -(q-1)^2C_{n-1,1} + q^2C_{n-1,121}.
\intertext{Collecting coefficients of $T_2$ from \eqref{expb}, \eqref{expd}, and \eqref{expg}, we get}
  C_{n,2} &=  - q(q-1)C_{n-1,21}  +  qC_{n-1,1}  + q(q{-}1)C_{n-1,21} = qC_{n-1,1}.
\intertext{Collecting coefficients of $T_1T_2$ from \eqref{exp0}, \eqref{expc}, \eqref{expd}, and \eqref{exph}, we get}
  C_{n,12} &=  C_{n-1,0}  - q(q-1)C_{n-1,121} + (q{-}1)C_{n-1,1}  + q(q{-}1)C_{n-1,121} =  C_{n-1,0}  + (q{-}1)C_{n-1,1}
\intertext{Collecting coefficients of $T_2T_1$ from \eqref{exp0}, \eqref{expb}, \eqref{expf}, and \eqref{exph}, we get}
  C_{n,21} &=  -(q{-}1)C_{n-1,2} -(q-1)^2C_{n-1,21} + qC_{n-1,12}  + q (q{-}1)C_{n-1,121}.
\intertext{Collecting coefficients of $T_1T_2T_1$ from \eqref{exp0}, \eqref{expc}, \eqref{expe}, \eqref{expf}, \eqref{expg}, and \eqref{exph},
we get}
 C_{n,121} &=   -(q{-}1)C_{n-1,12}  -(q-1)^2C_{n-1,121} + C_{n-1,2}  \\ &+ (q{-}1)C_{n-1,12} +  (q{-}1)C_{n-1,21} + (q{-}1)^2C_{n-1,121}
\\
         &= C_{n-1,2} +  (q{-}1)C_{n-1,21}
\end{align*}
Up to simple rearrangements and expansion of notation, these are formulas \eqref{cn0} through ~\eqref{cn121}.
\end{proof}
\begin{example} \label{secondCs}
Applying the recursion formulas just proved to the table of initial polynomials, 
or by computing $\rho\bigl( (\sigma_1 \sigma_2^{-1})^2 \bigr)$ directly from the definitions, 
we find
\begin{align}
  C_{2,0}(q) &= q^2 \cdot C_{1,21}(q) - q(q{-}1)\cdot C_{1,1}(q) = q(q{-}1)^2,
\\
  C_{2,1}(q) &= -(q{-}1)^2\cdot C_{1,1}(q)-(q{-}1)\cdot C_{1,0}(q) = (q{-}1)^3,
\\
  C_{2,2}(q) &= q \cdot C_{1,1}(q) = -q(q{-}1),
\\
  C_{2,12}(q) &= (q{-}1)\cdot C_{1,1}(q) + C_{1,0}(q) = -(q{-}1)^2,
\\
  C_{2,21}(q) &= -(q{-}1)\cdot C_{1,2}(q) + q \cdot C_{1,12}(q) - (q{-}1)^2\cdot C_{1,21}(q) = q,
\\
  C_{2,121}(q) &=0.
\end{align}
  \end{example}
As a first application, we have the following vanishing result.
\begin{proposition}
  \label{C121}
For all $n$, $C_{n,121}(q) = 0$.
\end{proposition}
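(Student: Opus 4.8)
The plan is to prove the stronger, and cleaner, statement that the auxiliary polynomial $D_n(q) := C_{n,2}(q) + (q{-}1)\,C_{n,21}(q)$ vanishes for every $n \geq 1$. This immediately gives the proposition, because relation \eqref{cn121} reads exactly $C_{n,121}(q) = D_{n-1}(q)$ for $n \geq 2$, while $C_{1,121}(q) = 0$ is recorded in \eqref{initialCs}. So the conceptual content is simply that $C_{n,121}$ and the sequence $D_n$ are the same thing shifted by one index, and it suffices to understand $D_n$.

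First I would set up a recursion for $D_n$ from the relations already proved. Substituting \eqref{cn2} and \eqref{cn21} into the definition and collecting terms, the combination $-(q{-}1)^2 C_{n-1,2} - (q{-}1)^3 C_{n-1,21}$ collapses to $-(q{-}1)^2 D_{n-1}$, leaving
\begin{equation*}
  D_n = q\,C_{n-1,1} + q(q{-}1)\,C_{n-1,12} - (q{-}1)^2 D_{n-1} + q(q{-}1)^2\,C_{n-1,121}.
\end{equation*}
The decisive step is a second cancellation: feeding \eqref{cn1} and \eqref{cn12} into $q\,C_{n-1,1} + q(q{-}1)\,C_{n-1,12}$, the $C_{n-2,1}$- and $C_{n-2,0}$-terms cancel in pairs and only $q^3 C_{n-2,121}$ survives. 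Hence, for $n \geq 3$,
\begin{equation*}
  D_n = q^3\,C_{n-2,121} - (q{-}1)^2 D_{n-1} + q(q{-}1)^2\,C_{n-1,121}.
\end{equation*}

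Using $C_{m,121} = D_{m-1}$ once more turns this (for $n \geq 4$) into a homogeneous three-term linear recursion $D_n = q^3 D_{n-3} - (q{-}1)^2 D_{n-1} + q(q{-}1)^2 D_{n-2}$ whose coefficients do not depend on $n$. I would then close the argument by strong induction: the initial values \eqref{initialCs} give $D_1 = 0$, Example \ref{secondCs} gives $D_2 = -q(q{-}1) + (q{-}1)\,q = 0$, and evaluating the displayed recursion at $n = 3$ (where the surviving $C_{1,121}$ and $C_{2,121}$ both vanish) gives $D_3 = 0$. Since a homogeneous recursion starting from three consecutive zero values produces only zeros, $D_n \equiv 0$ for all $n$, and therefore $C_{n,121}(q) = 0$.

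I expect the only genuine obstacle to be bookkeeping rather than ideas: verifying the two cancellations cleanly, and keeping the index ranges straight so that the substitutions of \eqref{cn1} and \eqref{cn12}, which are legitimate only for $n-1 \geq 2$, are never applied out of range. This is precisely why the base of the induction must be pushed up to include $D_3$ (computed from the mixed form rather than the pure three-term form). Everything else is routine polynomial algebra once the auxiliary variable $D_n$ has been isolated.
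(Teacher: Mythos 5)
Your proof is correct, and it is a genuine repackaging of the paper's argument rather than a copy of it, although the algebraic core coincides. The paper proves the proposition by strong induction directly on $C_{n,121}$: it expands $C_{n+1,121} = C_{n,2} + (q{-}1)C_{n,21}$ using \eqref{cn21}, \eqref{cn2}, \eqref{cn12}, and \eqref{cn1}, invoking the inductive hypothesis several times in mid-computation to discard the $C_{\cdot,121}$ terms, and lands on the conditional identity $C_{n+1,121} = -(q{-}1)^2\, C_{n,121}$. Your substitutions and your two cancellations are exactly the same ones, but by keeping the $121$-terms and converting them through the index-shift identity $C_{m,121} = D_{m-1}$ you obtain the \emph{unconditional} homogeneous recursion
\begin{equation*}
  D_n = -(q{-}1)^2\, D_{n-1} + q(q{-}1)^2\, D_{n-2} + q^3\, D_{n-3}, \qquad n \geq 4,
\end{equation*}
so that the induction collapses to the observation that zeros propagate from $D_1 = D_2 = D_3 = 0$. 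I checked the details: substituting \eqref{cn2} and \eqref{cn21} does give $D_n = q\,C_{n-1,1} + q(q{-}1)\,C_{n-1,12} - (q{-}1)^2 D_{n-1} + q(q{-}1)^2 C_{n-1,121}$; feeding in \eqref{cn1} and \eqref{cn12} does reduce the first two terms to $q^3 C_{n-2,121}$; your base values agree with \eqref{initialCs} and example \ref{secondCs}; and your range bookkeeping (mixed form at $n=3$, pure three-term form only from $n=4$) is exactly right. What your organization buys is a clean separation of pure polynomial algebra from the (now trivial) induction: the recursion is an identity independent of the statement being proved, which is harder to get wrong than the paper's interleaved computation (which in fact contains small index slips, such as a displayed $-(q{-}1)^3 C_{n,21}$ where $-(q{-}1)^3 C_{n-1,21}$ is meant), and it records a constant-coefficient three-term linear recursion satisfied by the sequence $C_{n,121}$, equivalently $D_n$, that could be of independent use. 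What the paper's version buys is brevity: no auxiliary sequence, no extra base case $D_3$, and no care needed about when the three-term form becomes valid.
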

\begin{proof}
For $n \geq 1$, we claim $C_{n+1,121}(q) = 0$.
Make the inductive assumption that $C_{k,121}(q) = 0$ for $1 \leq k \leq n$. Apply \eqref{cn121}, \eqref{cn21}, and the 
inductive hypothesis to write
\begin{multline*}
       C_{n+1,121}(q) = C_{n,2}(q) + (q{-}1)\cdot C_{n,21}(q)
\\
  \shoveleft = C_{n,2}(q) 
\\
   + (q{-}1)\Bigl( -(q{-}1)\cdot  C_{n-1,2}(q) + q\cdot C_{n-1,12}(q) - (q{-}1)^2\cdot C_{n-1,21}(q)
 + q(q{-}1)\cdot C_{n-1, 121} \Bigr) 
\\
  = C_{n,2}(q) + 
    (q{-}1)\bigl( -(q{-}1)\cdot C_{n-1,2}(q) + q\cdot C_{n-1,12}(q) -
    (q{-}1)^2\cdot C_{n-1,21}(q) \bigr).
\end{multline*}
Using \eqref{cn2} to replace the first term $C_{n,2}(q)$ and \eqref{cn12} to replace the third term factor $C_{n-1,12}(q)$ on the right,
\begin{align*}
  C_{n+1, 121}(q)  &= q\cdot C_{n-1,1}(q) - (q{-}1)^2C_{n-1,2}(q) + q(q{-}1)\bigl( (q{-}1)C_{n-2,1}(q)+C_{n-2,0}(q) \bigr)
 \\
 &- (q{-}1)^3C_{n,21}(q) 
\\
  &= q\cdot C_{n-1,1}(q) - (q{-}1)^2C_{n-1,2}(q) + (q{-}1)^2\Bigl( q C_{n-2,1}(q) \Bigr) + q(q{-}1)C_{n-2,0}(q) 
\\ &- (q{-}1)^3C_{n,21}(q) 
\\
  &= q\cdot C_{n-1,1}(q) - (q{-}1)^2C_{n-1,2}(q) + (q{-}1)^2C_{n-1,2}(q)+ q(q{-}1)C_{n-2,0}(q)
\\ &- (q{-}1)^3C_{n,21}(q),
\end{align*}
and using \eqref{cn2} in reverse to rewrite the term $q C_{n-2,1}(q) $.  Making the obvious cancellation,
\begin{align*}
  C_{n+1, 121}  &=  q\cdot C_{n-1,1}(q)+ q(q{-}1) \cdot C_{n-2,0}(q) - (q{-}1)^3 \cdot C_{n,21}(q) 
\\
&=q\bigl( C_{n-1,1}+ (q{-}1)C_{n-2,0}\bigr) - (q{-}1)^3 \cdot C_{n-1, 21}
\\
&= q\Bigl(\bigl(-(q{-}1)^2 \cdot C_{n-2,1} - (q{-}1) \cdot C_{n-2,0}\bigr) + (q{-}1) \cdot C_{n-2,0}\biggr) - (q{-}1)^3 \cdot C_{n-1, 21},
\end{align*}
since 
\begin{align*}  
C_{n-1,1}(q) &= - (q{-}1)^2\cdot C_{n-2,1}(q) - (q{-}1)\cdot C_{n-2,0}(q) + q^2\cdot C_{n-2,121}(q)
\\
            & = - (q{-}1)^2\cdot C_{n-2,1}(q) - (q{-}1)\cdot C_{n-2,0}(q)
\end{align*}
by \eqref{cn1} and the inductive hypothesis.  Therefore, 
\begin{align*}
C_{n+1,121}(q)  &= -q(q{-}1)^2\cdot C_{n-2,1}(q)  - (q{-}1)^3\cdot C_{n-1, 21}(q)
\\
&= -(q{-}1)^2\cdot C_{n-1,2}(q) - (q{-}1)^3\cdot C_{n-1,21}(q),
\intertext{using \eqref{cn2} in the form $  C_{n-1,2}(q) =  q\cdot C_{n-2,1}(q) $,}
&= -(q{-}1)^2\bigl( C_{n-1, 2}(q) - (q{-}1)\cdot C_{n-1,21} (q)\bigr)
\\
&= -(q{-}1)^2\cdot C_{n, 121}(q) = 0,
\end{align*}
using \eqref{cn121} and the inductive hypothesis.
\end{proof}
\section{Obtaining the Jones Polynomial}   \label{JonesPoly}
Following the construction given in 
\cite[p.288]{Jones_poly86}
we work over the function field $K = \bC(q,z)$, and we put $w=1{-}q{+}z$.  Let $H_{N+1}$ be the Hecke algebra over $K$ corresponding
to $q$ with $N$ generators as in definition \ref{Heckealgebras}. The
starting point is the following theorem.
\begin{theorem}
  \label{traces}
For $N \geq 1$ there is a family of trace functions $\Tr \colon H_{N+1} \ra K$ compatible with the inclusions $H_N \ra H_{N+1}$ satisfying
\begin{enumerate}
 \item $\Tr(1) = 1$,
\item $\Tr$ is $K$-linear and $ \Tr(ab) = \Tr(ba)$,
\item If $a, b \in H_N$, then $\Tr (aT_Nb) = z\Tr(ab)$.
\end{enumerate}
\end{theorem}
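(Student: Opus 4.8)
The plan is to construct the family $\{\Tr \colon H_{N+1} \ra K\}$ by induction on $N$, building the trace on $H_{N+1}$ from a given trace on $H_N$ and then checking the three conditions; compatibility with the inclusions will be built into the construction. The induction starts at $H_1 = K$ (no generators), where $K$-linearity and $\Tr(1)=1$ force $\Tr(c)=c$; conditions (1) and (2) hold trivially and (3) is vacuous. For the inductive step I assume $\Tr$ has been defined on $H_N$ with the stated properties and extend it across $H_{N+1}$.

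The extension rests on a structural decomposition of $H_{N+1}$ as a $K$-vector space. Let $H_{N-1} \subseteq H_N$ be the subalgebra generated by $T_1, \ldots, T_{N-2}$, that is, exactly the generators commuting with $T_N$ by relation \eqref{commutativity}. First I would establish the direct sum decomposition
\begin{equation*}
  H_{N+1} = H_N \oplus H_N T_N H_N,
\end{equation*}
together with the fact that the multiplication map $H_N \ten_{H_{N-1}} H_N \ra H_N T_N H_N$, $a \ten b \mapsto a T_N b$, is an isomorphism. Both claims follow from the $T_w$-basis of $H_{N+1}$ indexed by $w \in S_{N+1}$: the elements $T_w$ with $w \in S_N$ span $H_N$, those with $w \notin S_N$ span $H_N T_N H_N$, and the dimension count $N! + N\cdot N! = (N{+}1)!$ confirms both the directness of the sum and the isomorphism (which is well defined because $T_N$ centralizes $H_{N-1}$). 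Granting this, I set $\Tr$ equal to the given trace on the summand $H_N$, and I define
\begin{equation*}
  \Tr(a T_N b) = z\,\Tr(ab), \qquad a, b \in H_N,
\end{equation*}
on the second summand. This is well defined precisely because of the isomorphism above: the only relations among the spanning elements $a T_N b$ arise from sliding an element $h \in H_{N-1}$ across $T_N$, and then $(ah)b = a(hb)$ already holds in $H_N$, so the two assignments agree. Conditions (1) and (3) and compatibility with the inclusion are now immediate: $1 \in H_N$ gives $\Tr(1)=1$, condition (3) is the definition, and by construction $\Tr$ restricts to the old trace on $H_N$.

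The real content is condition (2), the cyclicity $\Tr(xy) = \Tr(yx)$. I would first reduce to the case in which $y$ is a single generator $T_i$: if $\Tr(x T_i) = \Tr(T_i x)$ holds for all $x$ and all $i$, then cyclicity for arbitrary $y$ follows by induction on the number of generators in $y$, since for $y = y_1 y_2$ with $y_1$ a generator one has $\Tr(x y_1 y_2) = \Tr((x y_1) y_2) = \Tr(y_2 (x y_1)) = \Tr((y_2 x) y_1) = \Tr(y_1 y_2 x)$, using the inductive hypothesis for the shorter word $y_2$ and the generator case for $y_1$. It then suffices to verify $\Tr(x T_i) = \Tr(T_i x)$ on elements $x$ in the normal form $H_N \oplus H_N T_N H_N$. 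For $i \leq N{-}1$ this is routine: $T_i \in H_N$ can be absorbed into the flanking $H_N$-factors of the normal form, and the identity collapses to the cyclicity already known on $H_N$.

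I expect the one genuine obstacle to be the generator $T_N$, whose single non-commuting neighbour is $T_{N-1}$. Verifying $\Tr(x T_N) = \Tr(T_N x)$ requires rewriting the products $a T_N b T_N$ and $T_N a T_N b$ (for $a,b \in H_N$) back into normal form, and this is where the quadratic relation \eqref{inverse}, $T_N^2 = (q{-}1)T_N + q$, and the braid relation \eqref{interchange}, $T_{N-1} T_N T_{N-1} = T_N T_{N-1} T_N$, must be used in tandem: splitting the middle factor into its $H_{N-1}$-part and its $T_{N-1}$-part, a centralizing factor $c \in H_{N-1}$ produces $c T_N^2 = c\bigl((q{-}1)T_N + q\bigr)$, while a factor involving $T_{N-1}$ is straightened by the braid relation. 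One then evaluates the trace of each resulting normal-form term by the inductive hypothesis and the defining formula $\Tr(a T_N b) = z\,\Tr(ab)$ and checks that the two sides match. This is the step that pins down the coefficient $z$ as the only consistent choice, and the main difficulty is organizing the resulting case analysis so that the quadratic and braid relations are applied uniformly rather than ad hoc.
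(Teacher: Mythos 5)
The paper offers no proof of Theorem \ref{traces}: it is Ocneanu's trace theorem, imported from \cite{Jones_poly86} as the starting point of section \ref{JonesPoly}, so your attempt can only be measured against the standard proof in the literature --- which is precisely the argument you outline. Your skeleton is correct in its main joints: induction on $N$; the decomposition $H_{N+1} = H_N \oplus H_N T_N H_N$; well-definedness of $\Tr(aT_Nb) = z\Tr(ab)$ through the multiplication isomorphism $H_N \ten_{H_{N-1}} H_N \iso H_N T_N H_N$; reduction of cyclicity to single generators; and the isolation of $T_N$ as the only hard generator. One small debt: the dimension count for $H_N \ten_{H_{N-1}} H_N$ presupposes that $H_N$ is free of rank $N$ as a module over $H_{N-1}$, with basis of coset representatives $1,\, T_{N-1},\, T_{N-1}T_{N-2},\, \ldots$; this same fact is what shows every $T_w$ with $w \notin S_N$ lies in $H_N T_N H_N$, so it should be stated once and used twice rather than left implicit in the phrase ``dimension count.''

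Two more substantive comments on the step you defer. First, it does close exactly as you predict, and more cleanly than your ``case analysis'' language suggests: for $x = aT_Nb$, split $b = b_1 + b_2T_{N-1}b_3$ with $b_i \in H_{N-1}$ (sums suppressed); using $T_N b_1 T_N = b_1T_N^2$, the braid relation $T_NT_{N-1}T_N = T_{N-1}T_NT_{N-1}$, property 3 at level $N$, and then $T_{N-1}^2 = (q{-}1)T_{N-1} + q$, one finds
\begin{equation*}
  \Tr(xT_N) = (q{-}1)z\Tr(ab) + q\Tr(ab_1) + qz\Tr(ab_2b_3),
\end{equation*}
and symmetrically $\Tr(T_Nx) = (q{-}1)z\Tr(ab) + q\Tr(a_1b) + qz\Tr(a_2a_3b)$ after splitting $a = a_1 + a_2T_{N-1}a_3$. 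Expanding $a$ in the first expression and $b$ in the second, the needed identity $\Tr(ab_1) + z\Tr(ab_2b_3) = \Tr(a_1b) + z\Tr(a_2a_3b)$ follows from property 3 at level $N{-}1$ alone; note that what closes the argument is compatibility of the family (the same relation one level down), not cyclicity on $H_N$. Second, your closing claim that this verification ``pins down the coefficient $z$ as the only consistent choice'' is wrong in an instructive way: the computation above goes through identically for \emph{every} value of $z$, which is exactly why $z$ survives as a free parameter of the ground field $K = \bC(q,z)$ and of the resulting two-variable invariant. What the computation genuinely requires is that the same $z$ appear at every level of the tower --- it constrains the family to be compatible, not the parameter to take a particular value.
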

Property 3 enables the calculation of $\Tr$ on basis elements of $H_{N+1}$ through use of the defining relations and induction. 
For $H_3$, note that
\begin{equation*}
  \Tr(T_1) = \Tr(T_2) = z, \quad \Tr(T_1T_2) = \Tr(T_2T_1) = z^2, \quad \Tr(T_1T_2T_1) = z \Tr(T_1^2) = z \bigl((q{-}1)z + q\bigr).
\end{equation*}
The next step toward the Jones polynomial of the knot that is the closure of
the braid $\alpha \in B_{N+1}$ is given by the formula
\begin{equation*}
  V_{\alpha}(q,z) = \Bigl(\frac{1}{z}\Bigr)^{(N + e(\alpha))/2}
                        \cdot \Bigl( \frac{q}{w} \Bigr)^{(N-e(\alpha))/2}\cdot \Tr\bigl(\rho(\alpha)\bigr),
\end{equation*}
where $e(\alpha)$ is the exponent sum of the word $\alpha$. The expression defines an element in the quadratic extension $K(\sqrt{q/zw})$. 
For the weaving knot $W(3,n)$, viewed as the closure of $(\sigma_1\sigma_2^{-1})^n$, we have the exponent sum $e=0$, and $N=2$, and 
\begin{equation*}
 \rho\bigl( (\sigma_1\sigma_2^{-1})^n \bigr)= (T_1T_2^{-1})^n 
        = q^{-n}\bigl( C_{n,0}(q) +  C_{n,1}(q)\cdot T_1 + C_{n,2}(q) \cdot T_2 + C_{n,12}(q) \cdot T_1T_2  + C_{n,21}(q) \cdot T_2T_1\bigr) ,
\end{equation*}
thanks to proposition \ref{C121}, which says the expression for $(T_1T_2^{-1})^n$ requires only the use of the basis elements
$1$, $T_1$, $T_2$,  $T_1T_2$ and $T_2T_1$. Then we have 
\begin{multline*}
  V_{(\sigma_1\sigma_2^{-1})^n}(q,z) \\  = \Bigl(\frac{1}{z}\Bigr)\cdot \Bigl( \frac{q}{w} \Bigr)\cdot q^{-n}
     \Tr \bigl(C_{n,0}(q) +  C_{n,1}(q)\cdot T_1 + C_{n,2}(q) \cdot T_2 + C_{n,12}(q) \cdot T_1T_2  
               + C_{n,21}(q) \cdot T_2T_1   \bigr)
\\
  = \Bigl(\frac{q}{zw}\Bigr)\cdot   q^{-n} \cdot \bigl( C_{n,0}(q) +   C_{n,1}(q) \cdot z + C_{n,2}(q) \cdot z + C_{n,12}(q) \cdot z^2
               + C_{n,21} (q) \cdot z^2 \bigr),
\end{multline*}
using the facts that $\Tr T_1 = \Tr T_2 = z$ and $\Tr T_1T_2 = \Tr T_2T_1 = z^2$. 
Consequently, the sums 
$ C_{n,1} + C_{n,2} $ and  $C_{n,12} + C_{n,21}$
are essential for understanding the two-variable Jones polynomial of $W(3,n)$, the closure of $\alpha=(\sigma_1\sigma_2^{-1})^n $. 
 Making the substitutions
\begin{equation*}
  q = t, \quad  z = \frac{t^2}{1+t}, \quad w = \frac{1}{1+t}
\end{equation*}
leads to the one-variable Jones polynomial
\begin{multline*}
  V_{W(3,n)}(t) = \frac{t(1{+}t)^2}{t^2} \cdot t^{-n} \cdot
     \Bigl( C_{n,0}(t) + (C_{n,1}(t)+C_{n,2}(t))\cdot \frac{t^2}{1{+}t} + (C_{n,12}(t) + C_{n,21}(t)) \cdot \frac{t^4}{(1{+}t)^2}\Bigr)
 \\
  = t^{-n-1}\cdot\bigl( (1{+}t)^2\cdot C_{n,0}(t) + (1{+}t)\cdot( C_{n,1}(t) + C_{n,2}(t) )\cdot t^2 + (C_{n,12}(t) + C_{n,21}(t))\cdot t^4 \bigr).
\end{multline*}
\begin{example}
  For $W(3,1)$, which is the unknot, we have
  \begin{align*}
 V_{W(3,1)}(t) 
   &=  t^{-2}\cdot\bigl( (1{+}t)^2\cdot C_{1,0}(t) + (1{+}t)\cdot( C_{1,1}(t) + C_{1,2}(t) )\cdot t^2 + (C_{1,12}(t) + C_{1,21}(t))\cdot t^4 \bigr) 
\\
  &=  t^{-2}\cdot\bigl( (1{+}t)^2\cdot 0 + (1{+}t)\cdot(-(t-1) + 0 )\cdot t^2 + (1 + 0 )\cdot t^4 \bigr)
\\
  &= t^{-2}\cdot ( (1{-}t^2) t^2 + t^4 ) = 1.
  \end{align*}
\end{example}
\begin{example} \label{jonesfig8knot}
  For $W(3,2)$, which is the figure-8 knot, we have
  \begin{align*}
    V_{W(3,2)}(t) 
&=t^{-3}\cdot \bigl( (1{+}t)^2\cdot C_{2,0}(t) + (1{+}t)\cdot( C_{2,1}(t) + C_{2,2}(t) )\cdot t^2 
                              + (C_{2,12}(t) + C_{2,21}(t))\cdot t^4 \bigr)
\\
&=t^{-3}\cdot \bigl( (1{+}t)^2\cdot t(t{-}1)^2 +(1{+}t)\cdot( (t{-}1)^3 - t(t{-}1)  ) \cdot t^2
                              +( -(t{-}1)^2+   t ) \cdot t^4 \bigr) 
\\
&= t^{-3}\cdot \bigl( t^5 - t^4 + t^3 -t^2 + t \bigr) = t^2 - t + 1 -t^{-1} + t^{-2}
  \end{align*}
\end{example}

Now we take a closer look at the formal expression 
\begin{multline*}
  V_{W(3,n)}(t) =
 \\
  = t^{-n-1}\cdot\bigl( (1{+}t)^2\cdot C_{n,0}(t) + (1{+}t)\cdot( C_{n,1}(t) + C_{n,2}(t) )\cdot t^2 + (C_{n,12}(t) + C_{n,21}(t))\cdot t^4 \bigr)
\end{multline*}
for the Jones polynomial of the weaving knot $W(3,n)$. 
\begin{proposition}
  \label{degrees1}
We have a uniform bound on the degrees of the polynomials $C_{n,*}$. Namely,
\begin{equation*}
  \deg(C_{n,*}) \leq 2n-1,
\end{equation*}
and the sharper bounds
\begin{equation}
  \deg(C_{n,2}) \leq 2n{-}2, \quad \deg(C_{n,12}) \leq 2n{-}2, \quad \text{and} \quad \deg(C_{n,21}) \leq 2n{-}3.
\end{equation}
\end{proposition}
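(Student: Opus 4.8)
The plan is to prove all the stated bounds simultaneously by induction on $n$, carrying the five nonzero coefficients $C_{n,0}$, $C_{n,1}$, $C_{n,2}$, $C_{n,12}$, $C_{n,21}$ as a single package; the sixth coefficient $C_{n,121}$ vanishes identically by Proposition \ref{C121}, so I may discard the $C_{n-1,121}$ terms appearing in \eqref{cn1} and \eqref{cn21} at the outset and work with the simplified recursions. The bundle of inductive hypotheses I maintain at level $n$ is
\begin{gather*}
\deg(C_{n,0}) \leq 2n{-}1,\quad \deg(C_{n,1}) \leq 2n{-}1,\quad \deg(C_{n,2}) \leq 2n{-}2,
\\
\deg(C_{n,12}) \leq 2n{-}2,\quad \deg(C_{n,21}) \leq 2n{-}3.
\end{gather*}
It is essential to carry the sharp bounds on $C_{n,2}$, $C_{n,12}$, $C_{n,21}$ through the induction, not merely the uniform $2n{-}1$, because the recursion \eqref{cn21} will feed on the sharper bounds of the previous level.

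For the base case $n=1$, the initial values \eqref{initialCs} give $C_{1,1} = -(q{-}1)$ of degree exactly $1 = 2(1){-}1$, while $C_{1,12} = 1$ has degree $0 = 2(1){-}2$ and the remaining three coefficients are zero, so every inequality holds vacuously or with room to spare. (Example \ref{secondCs} provides an independent check at $n=2$.)

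For the inductive step I substitute the level-$(n{-}1)$ hypotheses into each simplified recursion, bounding the degree of a sum by the maximum of the degrees of its summands and using $\deg\!\bigl(q^a(q{-}1)^b f\bigr) = a+b+\deg f$. Concretely, \eqref{cn0} yields $\deg(q^2C_{n-1,21}) \leq 2n{-}3$ and $\deg(q(q{-}1)C_{n-1,1}) \leq 2n{-}1$, so $\deg(C_{n,0}) \leq 2n{-}1$; \eqref{cn1} gives $\deg(C_{n,1}) \leq \max(2n{-}1,\,2n{-}2) = 2n{-}1$; \eqref{cn2} gives $\deg(C_{n,2}) \leq 2n{-}2$; \eqref{cn12} gives $\deg(C_{n,12}) \leq \max(2n{-}2,\,2n{-}3) = 2n{-}2$; and \eqref{cn21} produces the three contributions $\deg((q{-}1)C_{n-1,2}) \leq 2n{-}3$, $\deg(qC_{n-1,12}) \leq 2n{-}3$, and $\deg((q{-}1)^2C_{n-1,21}) \leq 2n{-}3$, whence $\deg(C_{n,21}) \leq 2n{-}3$. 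This reproduces the full bundle at level $n$ and closes the induction.

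The computations are routine; the only delicate point, and the reason the bounds must travel as a bundle, is the recursion \eqref{cn21} for $C_{n,21}$. It multiplies $C_{n-1,2}$ and $C_{n-1,12}$ by degree-one factors and $C_{n-1,21}$ by a degree-two factor, so the target $\deg(C_{n,21}) \leq 2n{-}3$ forces the inputs to satisfy the \emph{sharper} bounds $\deg(C_{n-1,2}), \deg(C_{n-1,12}) \leq 2n{-}4$ and $\deg(C_{n-1,21}) \leq 2n{-}5$; feeding in only the uniform bound $2(n{-}1){-}1$ on these inputs would yield merely $\deg(C_{n,21}) \leq 2n{-}2$. Since the sharp bound on $C_{n,21}$ thus requires the sharp bound on $C_{n-1,21}$ itself, the triple $\{C_{\cdot,2}, C_{\cdot,12}, C_{\cdot,21}\}$ forms a self-propagating sub-system, and it is this interdependence, rather than any single estimate, that is the crux of the argument.
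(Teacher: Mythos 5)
Your proposal is correct and follows essentially the same route as the paper: a simultaneous induction carrying all five degree bounds (the uniform $2n{-}1$ bounds on $C_{n,0}$, $C_{n,1}$ together with the sharper bounds on $C_{n,2}$, $C_{n,12}$, $C_{n,21}$), using Proposition \ref{C121} to drop the $C_{n-1,121}$ terms and the recursions \eqref{cn0}--\eqref{cn21} for the inductive step, with the initial values \eqref{initialCs} as base case. Your closing remark about why the sharper bounds must travel through the induction as a bundle --- in particular that \eqref{cn21} needs the sharp bounds on $C_{n-1,2}$, $C_{n-1,12}$, $C_{n-1,21}$ --- is exactly the point implicit in the paper's estimates, made explicit.
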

\begin{proof}
  These are  easy arguments by induction, using either the formulas \eqref{initialCs} or the formulas in example \ref{secondCs}
to start the inductions.
Use the recursion formulas \eqref{cn0} through
  \eqref{cn21} along with the fact that $C_{n, 121}= 0$, proved in proposition \ref{C121}, for the inductive step.
We have 
\begin{align}
  \begin{split}
      \deg(C_{n,0}) &\leq \max\{ \deg(C_{n-1,21})+ 2, \; \deg(C_{n-1,1}) + 2\} 
  \\
               &\leq \max\{(2n{-}5)+2, (2n{-}3)+2 \} = 2n{-}1;
  \end{split}
\\
\begin{split}
  \deg(C_{n,1}) &\leq \max\{ \deg(C_{n-1,1}) + 2, \; \deg(C_{n-1,0}) + 1\} 
\\
               &\leq \max\{(2n{-}3)+2, (2n{-}3)+1\} = 2n-1;
\end{split}
\\
  \deg(C_{n,2}) &= \deg(C_{n-1,1}) + 1 \leq (2n{-}3) + 1 = 2n-2;
\\
\begin{split}
    \deg(C_{n,12}) &\leq \max\{ \deg(C_{n-1,1}) + 1, \deg(C_{n-1,0}) \}
\\
   &\leq \max \{ (2n{-}3)+1, 2n-3 \} = 2n-2;
\end{split}
\\
\begin{split}
  \deg(C_{n,21}) &\leq \max\{ \deg(C_{n-1,2}) + 1, \deg( C_{n-1,12}) +1, \deg(C_{n-1,21}) + 2 \} 
\\ 
               &\leq \max\{ (2n{-}4) + 1, (2n{-}4) + 1, (2n{-}5)+ 2\} = 2n{-}3.
\end{split}
  \end{align}
\end{proof}
 Accordingly, set
\begin{align*}
C_{n,0}(q) &= \sum_{i=0}^{2n-1} c_{n,0,i}q^i, &
  C_{n,1}(q) &= \sum_{i=0}^{2n-1} c_{n,1,i}q^i, & 
  C_{n,2}(q) &= \sum_{i=0}^{2n-2} c_{n,2,i}q^i,
\\
  C_{n,12}(q) &= \sum_{i=0}^{2n-2} c_{n,12,i}q^i,
& &\text{and} & 
 C_{n,21}(q) &= \sum_{i=0}^{2n-3} c_{n,21,i}q^i.
\end{align*}
\begin{lemma} \label{Cn0coefficients}
  In the polynomial $C_{n,0}(q)$, the constant term $c_{n,0,0} = 0$  for all   $n \geq 1$, 
  and the degree one coefficient $c_{n,0,1}=(-1)^{n-2}$ for $n \geq 2$.
\end{lemma}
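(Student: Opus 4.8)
The plan is to read both assertions directly off the recursion \eqref{cn0}, supplemented by one short auxiliary computation of the constant term of $C_{n,1}$. The key observation is that the right quantity to track alongside $c_{n,0,1}$ is $c_{n-1,1,0}$, and that the associated constant-term recursions decouple precisely because $C_{n,0}$ turns out to be divisible by $q$.

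First I would dispose of the constant term. The recursion \eqref{cn0} reads $C_{n,0}(q) = q^2\,C_{n-1,21}(q) - q(q{-}1)\,C_{n-1,1}(q)$, and both summands are divisible by $q$ (the first by $q^2$, the second because $q(q{-}1)$ carries a factor of $q$). Hence $C_{n,0}(q)$ is divisible by $q$, so $c_{n,0,0}=0$ for every $n \geq 2$; the case $n=1$ is immediate from $C_{1,0}=0$ in \eqref{initialCs}. This requires no induction.

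Next I would reduce the degree-one coefficient to a constant term. Rewriting $-q(q{-}1) = q - q^2$, the only summand of $C_{n,0}$ contributing in degree $1$ is $q\cdot C_{n-1,1}(q)$, since $q^2\,C_{n-1,21}(q)$ and $-q^2\,C_{n-1,1}(q)$ are supported in degrees $\geq 2$. Therefore $c_{n,0,1} = c_{n-1,1,0}$, the constant term of $C_{n-1,1}$. It then remains to compute $c_{n,1,0}$. For this I would invoke Proposition \ref{C121}, which gives $C_{n-1,121}=0$, so that \eqref{cn1} collapses to $C_{n,1}(q) = -(q{-}1)^2\,C_{n-1,1}(q) - (q{-}1)\,C_{n-1,0}(q)$. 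Evaluating constant terms (at $q=0$ the factor $(q{-}1)^2$ becomes $1$ and $(q{-}1)$ becomes $-1$) and using $c_{n-1,0,0}=0$ from the previous step, I obtain the clean one-step recursion $c_{n,1,0} = -c_{n-1,1,0}$. Since $C_{1,1}(q) = -(q{-}1)$ has constant term $c_{1,1,0}=1$ by \eqref{initialCs}, this yields $c_{n,1,0} = (-1)^{n-1}$, and hence $c_{n,0,1} = c_{n-1,1,0} = (-1)^{(n-1)-1} = (-1)^{n-2}$ for $n \geq 2$.

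I do not expect a genuine obstacle here: the argument is elementary coefficient bookkeeping once the recursions and the vanishing $C_{n,121}=0$ are in hand. The only point requiring a small amount of foresight is recognizing that $c_{n,0,1}$ should be chased through $c_{n-1,1,0}$ rather than attacked directly, and that the constant-term recursion for $C_{n,1}$ decouples from the others precisely because $c_{n,0,0}=0$ kills the $-(q{-}1)\,C_{n-1,0}$ contribution. As a sanity check I would confirm against Example \ref{secondCs}, where $C_{2,0}(q) = q(q{-}1)^2$ indeed has vanishing constant term and degree-one coefficient $1 = (-1)^{0}$.
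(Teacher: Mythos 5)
Your proof is correct and follows essentially the same route as the paper: both arguments read $c_{n,0,0}=0$ directly off the recursion \eqref{cn0}, reduce $c_{n,0,1}$ to the constant term $c_{n-1,1,0}$, and then compute $c_{n,1,0}=(-1)^{n-1}$ by evaluating the simplified form of \eqref{cn1} (using Proposition \ref{C121}) at $q=0$ together with $c_{n-1,0,0}=0$. The only cosmetic difference is that the paper extracts $c_{n,0,1}$ by differentiating \eqref{cn0} and setting $q=0$, whereas you do the same coefficient extraction by expanding $-q(q{-}1)=q-q^2$ directly.
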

\begin{proof}
  The first polynomial $C_{1,0}(q) = 0$, and setting $q=0$ in the recurrence relation \eqref{cn0} immediately yields
  \begin{equation*}
    c_{n,0,0} = C_{n,0}(0) = 0.
  \end{equation*}
Differentiate the recursion relation \eqref{cn0} with respect to $q$, obtaining
\begin{equation*}
  C_{n,0}'(q) = \bigl( 2q\cdot C_{n-1,21}(q)+ q^2\cdot C_{n-1,21}'(q) \bigr)  
              - \bigl( (2q{-}1)\cdot C_{n-1,1}(q)+ q(q{-}1) \cdot C_{n-1,1}'(q)\bigr) .
\end{equation*}
Substituting $q=0$ yields immediately $c_{n,0,1} = C_{n,0}'(0) = C_{n-1,1}(0) = c_{n-1,1,0}$.  We have 
\begin{equation*}
   C_{n,1}(q) = - (q{-}1)^2\cdot C_{n-1,1}(q) - (q{-}1)\cdot C_{n-1,0}(q), 
\end{equation*}
simplifying relation \eqref{cn1} using proposition \ref{C121} to set $C_{n-1,121}(q)=0$. 
Now we prove $c_{n,1,0} = (-1)^{n-1}$ for all $n \geq 1$. 
We have $C_{1,1}(q) = -(q-1)$, so $c_{1,1,0} = 1$ as claimed.  Substituting $q=0$ and using $c_{n,0,0}=0$ we get
\begin{equation*}
  c_{n,1,0} = C_{n,1}(0) = -(-1)^2\cdot C_{n-1,1}(0) - (-1) \cdot C_{n-1,0}(0) = -c_{n-1,1,0} + 0 = -(-1)^{n-2}=(-1)^{n-1}
\end{equation*}
Thus $c_{n,0,1} = c_{n-1,1,0} = (-1)^{n-2}$, as claimed.
\end{proof}
Thus, we improve the expression for $C_{n,0}(q)$ slightly, obtaining $C_{n,0}(q) = \sum_{i=1}^{2n-1} c_{n,0,i}q^i$. Now we examine
\begin{align*}
 V_{W(3,n)}(t) 
&= t^{-n-1}\cdot \bigl( (1{+}t)^2\cdot C_{n,0}(t) + (t^2{+}t^3)\cdot( C_{n,1}(t) + C_{n,2}(t) ) + t^4 \cdot (C_{n,12}(t) + C_{n,21}(t)) \bigr)
\\
\begin{split}
  &=  t^{-n-1}\cdot \Biggl( (1{+}t)^2\cdot \biggl(\sum_{i=1}^{2n-1} c_{n,0,i}t^i\biggr) 
\\
     & \hspace{0.15\linewidth} + (t^2{+}t^3)\cdot \Bigl( \sum_{i=0}^{2n-1} c_{n,1,i}t^i + \sum_{i=0}^{2n-2} c_{n,2,i}t^i \Bigr) 
\\
        & \hspace{0.30\linewidth} + t^4 \cdot \biggl(\sum_{i=0}^{2n-2} c_{n,12,i}t^i + \sum_{i=0}^{2n-3} c_{n,21,i}t^i\biggr) \Biggr).
\end{split}
\end{align*}
In the expression for $V_{W(3,n)}(t)$ the highest degree term is apparently
\begin{equation*}
  t^{-n-1}\cdot\bigl( c_{n,1,2n-1} \cdot t^{2n+2} + c_{n,12,2n-2} \cdot t^{2n+2} \bigr),
\end{equation*}
but we claim this term is actually zero.
\begin{proof}
  Using  $\deg(C_{n-1,1}) \leq 2n-3$, $\deg(C_{n-1,0}) \leq 2n-3$, the fact that $C_{n, 121}=0$, and simplifying the  recursion formulas
\eqref{cn1} and \eqref{cn12}, respectively, to 
\begin{equation*}
    C_{n,1}(q) = - (q{-}1)^2\cdot C_{n-1,1}(q) - (q{-}1)\cdot C_{n-1,0}(q)
\; \text{and} \;
   C_{n,12}(q) = (q{-}1)\cdot C_{n-1,1}(q) + C_{n-1,0}(q), 
\end{equation*}
 we compute
  \begin{equation*}
    c_{n,1,2n-1} + c_{n,12,2n-2} =  -c_{n-1,1,2n-3} + c_{n-1,1,2n-3} = 0. \qedhere
  \end{equation*}
\end{proof}
So the degree of the highest term in $V_{W(3,n)}$ is no more than the degree of $t^{-n-1}\cdot t^{2n+1}$ which is $n$.  
Lemma \ref{Cn0coefficients} shows that the term of lowest degree is $\pm t^{-n-1} \cdot t = \pm t^{-n}$, so the 
span of the Jones polynomial $V_{W(3,n)}$ is no more than $2n$. Of course, Kauffman \cite[Theorem~2.10]{States}
has proved that the span of the polynomial is, in fact, precisely $2n$. 
\section{From the Jones Polynomial to Khovanov homology}  \label{Jones-to-Khovanov}
In this section we amplify Theorem \ref{locateKH}, at least the first part of it. \addtocounter{section}{-3} \addtocounter{theorem}{4}
\begin{theorem}
For a weaving knot $W(2k{+}1,n)$ the non-vanishing Khovanov homology ${\mathcal H}^{i,j}\bigl( W(2k{+}1, n) \bigr)$ lies on the 
lines
\begin{equation*}
  j = 2i \pm 1.
\end{equation*}
For a weaving knot $W(2k, n)$ the non-vanishing Khovanov homology ${\mathcal H}^{i,j}\bigl( W(2k, n) \bigr)$ lies on the lines
\begin{equation*}
  j = 2i + n -1 \pm 1
\end{equation*}
 \end{theorem}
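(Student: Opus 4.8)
The plan is to obtain the statement as an immediate consequence of Lee's support theorem together with the signature computation already in hand. Theorem \ref{locateKHLee} asserts that for any alternating knot $L$ the groups ${\mathcal H}^{i,j}(L)$ are concentrated on the two diagonals $j = 2i - \sigma(L) \pm 1$, so the entire content of the present theorem is encoded in the single integer $\sigma(L)$. Since weaving knots are alternating by their very definition --- the braid word $(\sigma_1 \sigma_2^{-1} \cdots)^q$ produces an alternating diagram --- the hypothesis of Theorem \ref{locateKHLee} is met, and no preliminary reduction is needed before substituting.

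First I would invoke Proposition \ref{weavingsignature}, which records $\sigma\bigl(W(2k{+}1,n)\bigr) = 0$ and $\sigma\bigl(W(2k,n)\bigr) = -n{+}1$. Substituting the first value into the pair of lines $j = 2i - \sigma(L) \pm 1$ gives $j = 2i \pm 1$, which is exactly the claim in the odd case. Substituting the second value gives
\begin{equation*}
  j = 2i - (-n{+}1) \pm 1 = 2i + n - 1 \pm 1,
\end{equation*}
which is the claim in the even case. This completes the argument.

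Since both halves follow by a single substitution, I do not expect any genuine obstacle at this stage: all of the work lives upstream, in the geometric circle-counting of Proposition \ref{weavingsignature}, where one verifies that the $A$-smoothing of the standard diagram yields $1 + kn$ circles in the odd case and $(k{-}1)n + 2$ circles in the even case. The only subtlety worth flagging is that Proposition \ref{basic_signature}, on which the signature formula rests, requires a reduced, non-split alternating diagram; for the knots $W(2k{+}1,n)$ and $W(2k,n)$ connectedness makes the non-split hypothesis automatic, and the standard diagrams are reduced, so these conditions are already met in the cited proof. The harder part --- properly bookkeeping the parity of the rings of crossings --- has thus been dispatched before we reach this theorem, leaving only the substitution itself.
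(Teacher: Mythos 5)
Your proposal is correct and matches the paper's proof exactly: the paper likewise proves this theorem in one line by substituting the signature values from Proposition \ref{weavingsignature} into the two-line support formula of Theorem \ref{locateKHLee}. Your extra remark verifying the reduced, non-split hypotheses of Proposition \ref{basic_signature} is sound but lives upstream in the signature computation, just as you note.
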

We have the following definition of the bi-graded Euler characteristic
associated to Khovanov homology. 
 \begin{equation*}
   Kh(L)(t,Q)  \stackrel{{\rm def}}{=} \sum t^iQ^j \dim {\mathcal H}^{i,j}(L)
 \end{equation*}
\addtocounter{section}{3} \addtocounter{theorem}{-5}
 \begin{theorem}[Theorem 1.1, \cite{Lee_Endo04}] \label{Lee1}
For an oriented link $L$, the graded Euler characteristic
\begin{equation*}
  \sum_{i,j \in \bZ} (-1)^iQ^j  \dim {\mathcal H}^{i,j}(L) 
\end{equation*}
   of the Khovanov invariant ${\mathcal H}(L)$ is equal to $(Q^{-1}{+}Q)$
   times the Jones polynomial $V_L(Q^2)$ of $L$.

In terms of the associated polynomial $Kh(L)$, 
\begin{equation} \label{JonesfromKhovanov}
  Kh(L)(-1, Q) = (Q^{-1}+Q)V_L(Q^2).
\end{equation}
 \end{theorem}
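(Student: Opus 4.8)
The plan is to derive the identity directly from the construction of Khovanov homology, using the elementary fact that the graded Euler characteristic of a bigraded chain complex agrees with that of its homology. Fix a diagram $D$ of $L$ with $n = n_+ + n_-$ crossings, $n_+$ positive and $n_-$ negative. First I would recall the cube of resolutions: each crossing has a $0$- and a $1$-smoothing, producing $2^n$ states $\alpha \in \{0,1\}^n$, and to the $k_\alpha$ disjoint circles of the state $\alpha$ one assigns $V^{\ten k_\alpha}$, where $V$ is the two-dimensional graded vector space with graded dimension $\dim_Q V = Q + Q^{-1}$. After the homological shift by $-n_-$ and the quantum shift by $n_+ - 2n_-$, the Khovanov chain group is
\[
 C^{i,j}(D) = \bigoplus_{\abs{\alpha} = i + n_-} \bigl( V^{\ten k_\alpha}\{i + n_+ - n_-\} \bigr)_j,
\]
with $\mathcal{H}^{i,j}(L)$ its homology. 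The one structural input I need is that the Khovanov differential raises $i$ by one while preserving $j$.

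Second, since the differential preserves the quantum grading $j$, for each fixed $j$ the groups $C^{\bullet,j}(D)$ form an ordinary chain complex in the variable $i$, so the Euler--Poincar\'e principle gives $\sum_i (-1)^i \dim C^{i,j} = \sum_i (-1)^i \dim \mathcal{H}^{i,j}$. Multiplying by $Q^j$ and summing over $j$ shows that the graded Euler characteristic $Kh(L)(-1,Q)$ of the homology equals the graded Euler characteristic of the complex. It therefore suffices to evaluate the latter, which is a state sum over the cube.

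Third, I would carry out that state sum. Using $\dim_Q V^{\ten k_\alpha} = (Q + Q^{-1})^{k_\alpha}$ and reindexing the homological degree by the height $r = \abs{\alpha}$, the graded Euler characteristic of the complex collapses to
\[
 (-1)^{n_-}\, Q^{n_+ - 2n_-} \sum_{\alpha \in \{0,1\}^n} (-Q)^{\abs{\alpha}}\, (Q + Q^{-1})^{k_\alpha}.
\]
This is precisely the Kauffman-bracket state sum of $D$ in the variable $Q$, multiplied by the writhe-dependent monomial that comes from the two grading shifts.

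Finally, I would identify this expression with $(Q^{-1}+Q)V_L(Q^2)$. The cube sum satisfies the unoriented Kauffman skein relation — the difference of the $0$- and $1$-smoothings, with each circle contributing the factor $Q + Q^{-1}$ — and evaluates to $Q + Q^{-1}$ on the unknot; the prefactor $(-1)^{n_-}Q^{n_+ - 2n_-}$ is exactly the writhe normalization that upgrades the bracket to a link invariant. Translating the unoriented skein relation into the oriented one and substituting $t = Q^2$ then matches the result to $(Q^{-1}+Q)V_L(Q^2)$. I expect this last step to be the main obstacle: the first two steps are formal, and all the genuine content lies in the bookkeeping of signs, grading shifts, and crossing-sign conventions needed to see that the normalized bracket is the Jones polynomial under $t = Q^2$.
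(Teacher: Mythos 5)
Your outline is correct, but there is no proof in the paper to compare it with: the paper states this result purely as a quotation of Theorem 1.1 of Lee's paper (the result is originally Khovanov's categorification theorem) and uses it as a black box for passing between the Jones polynomial and Khovanov homology; so your submission supplies an argument where the paper supplies a citation. What you wrote is the standard proof, in the style of Khovanov's original paper and Bar-Natan's exposition, and it is sound. The two formal inputs are exactly right: the differential raises $i$ by one and preserves $j$, so the Euler--Poincar\'e principle applied to each fixed-$j$ column of the finite-dimensional complex reduces the claim to the chain-level state sum; and your shifted chain groups carry the correct grading, since the usual per-state shift $\abs{\alpha} + n_+ - 2n_-$ equals $i + n_+ - n_-$ when $i = \abs{\alpha} - n_-$. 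For the last step, which you correctly flag as the only delicate point, the bookkeeping closes under the substitution $Q = -A^{-2}$: then $Q + Q^{-1} = -A^{2} - A^{-2}$ is the Kauffman circle value and $(-Q)^{\abs{\alpha}} = A^{-2\abs{\alpha}}$, so, because the $0$-smoothing is the $A$-smoothing (the convention this paper itself uses in section 2), the cube sum equals $A^{-n}\,(Q+Q^{-1})\,\langle D \rangle$ with the bracket normalized to $1$ on the unknot, while the prefactor assembles into the writhe normalization:
\begin{equation*}
  (-1)^{n_-}\, Q^{\,n_+ - 2n_-}\, A^{-n} \;=\; (-1)^{n_+ + n_-}\, A^{-3(n_+ - n_-)} \;=\; (-A)^{-3w(D)} .
\end{equation*}
Hence the graded Euler characteristic is $(-A)^{-3w(D)}\langle D\rangle \cdot (Q+Q^{-1}) = (Q^{-1}+Q)\,V_L(t)$ with $t = A^{-4} = Q^{2}$, which is \eqref{JonesfromKhovanov}. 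With that one computation made explicit, your proposal is a complete proof; its only limitation relative to the citation is that it proves the Euler-characteristic identity for the theory as constructed from a diagram, and it implicitly relies on the (standard) fact that the normalized bracket so obtained is the Jones polynomial of the oriented link.
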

 \begin{theorem}[Compare Theorem 1.4 and subsequent remarks, \cite{Lee_Endo04}] \label{Lee4}
   For an alternating knot $L$, its Khovanov invariants ${\mathcal H}^{i,j}(L)$ of 
degree difference $(1,4)$ are paired except in the $0$th cohomology group. 
 \end{theorem}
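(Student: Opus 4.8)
The plan is to derive the pairing from E.~S.~Lee's deformation of the Khovanov complex \cite{Lee_Endo04}. Recall that Lee equips the Khovanov chain groups with a differential equal to the Khovanov differential plus a correction term that strictly raises the quantum grading; the total differential still raises homological degree by $1$, but it is only filtered, not homogeneous, in the quantum degree $j$, and its correction piece has degree difference exactly $(1,4)$. Filtering the Lee complex by quantum degree yields a spectral sequence whose $E_1$-page is the Khovanov homology ${\mathcal H}^{i,j}(L)$, whose $r$-th differential $d_r$ has bidegree $(1,4r)$, and which converges to the Lee homology of $L$. The pairing claimed in the theorem will be exactly the isomorphisms induced by the first differential $d_1$, of bidegree $(1,4)$.

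First I would exploit the thinness established in Theorem \ref{locateKHLee}: writing $\sigma = \sigma(L)$, the nonzero groups ${\mathcal H}^{i,j}(L)$ lie on the two diagonals $j = 2i - \sigma \pm 1$. A differential $d_r$ of bidegree $(1,4r)$ carries a class on the diagonal $j = 2i - \sigma + s$, with $s \in \{+1,-1\}$, to homological degree $i+1$ and quantum degree $2i - \sigma + s + 4r = 2(i{+}1) - \sigma + (s + 4r - 2)$. For the target to lie again on one of the two diagonals we need $s + 4r - 2 \in \{+1,-1\}$, and a short check shows the only solution with $r \geq 1$ is $r = 1$ with $s = -1$. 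Hence every $d_r$ with $r \geq 2$ vanishes, the spectral sequence degenerates at $E_2$, and the one surviving differential is $d_1 \colon {\mathcal H}^{i,\,2i-\sigma-1}(L) \to {\mathcal H}^{i+1,\,2(i+1)-\sigma+1}(L)$, which runs from the lower diagonal to the upper diagonal and has precisely degree difference $(1,4)$.

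Finally I would invoke Lee's computation of the limit: for a knot the Lee homology is two-dimensional and concentrated in homological degree $0$, spanned by the two classes attached to the two orientations of the knot. Because the spectral sequence collapses at $E_2$, we have $E_2 \cong E_\infty \cong$ Lee homology, so the homology of $\bigl( {\mathcal H}^{*,*}(L), d_1 \bigr)$ is two-dimensional and lives in homological degree $0$. Consequently $d_1 \colon {\mathcal H}^{i,\,2i-\sigma-1}(L) \to {\mathcal H}^{i+1,\,2(i+1)-\sigma+1}(L)$ is an isomorphism for every $i \notin \{-1,0\}$; it is injective at $i=-1$ and surjective at $i=0$, and the resulting one-dimensional cokernel and kernel together account for the two-dimensional Lee homology sitting in ${\mathcal H}^{0,*}(L)$. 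This is exactly the assertion that the Khovanov generators of an alternating knot are matched in pairs of degree difference $(1,4)$ outside the $0$th cohomology group. I expect the degree bookkeeping to be routine; the real content lies in the two imported facts that make it bite, namely that Lee's perturbation contributes a genuine $d_1$ of bidegree $(1,4)$ and that the Lee homology of a knot is two-dimensional and supported in homological degree $0$. Granting these, thinness forces the collapse and the pairing drops out.
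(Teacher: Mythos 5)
The paper offers no proof of this statement: it is imported verbatim from Lee's paper (Theorem 1.4 and the remarks following it in \cite{Lee_Endo04}), and the authors use it as a black box to pass between the Jones polynomial and the Khovanov polynomial via equation \eqref{polynomialperiodicity}. So there is no in-paper argument to compare against; what your proposal reconstructs is, in all essentials, Lee's own proof. Your reconstruction is sound: the perturbed differential is filtered with correction of bidegree $(1,4)$, the induced spectral sequence has $d_r$ of bidegree $(1,4r)$, and your diagonal bookkeeping ($s + 4r - 2 \in \{\pm 1\}$ forcing $r=1$, $s=-1$) correctly shows that thinness, i.e.\ Theorem \ref{locateKHLee}, kills all higher differentials, so the only possible differential runs from the lower diagonal to the upper one with degree difference exactly $(1,4)$. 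Combined with the two-dimensionality of Lee homology for a knot, concentrated in homological degree $0$, this is precisely how Lee derives the pairing.

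One step deserves more care than you give it. From collapse at $E_2$ you know only that the total surviving dimension in homological degree $0$ is two; a priori it could be distributed as a two-dimensional cokernel of the map at $i=-1$ and zero kernel at $i=0$, or the reverse, rather than one of each. Ruling this out requires the quantum filtration grades of the two surviving Lee generators, which for an alternating knot are $-\sigma(L) \pm 1$ (this is exactly the content of the ``subsequent remarks'' in \cite{Lee_Endo04}, and is what puts one survivor on each of the two diagonals at $i=0$). Without that input, the statement that $d_1$ is injective at $i=-1$ and surjective at $i=0$ with one-dimensional cokernel and kernel respectively is an assertion, not a consequence. This is a small patchable gap, not a wrong turn: cite Lee's computation of the filtration degrees of the orientation generators (or, equivalently, the fact that the two grades differ by $2$) and the argument closes.
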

This fact may be expressed in terms of the polynomial $Kh(L)$, as follows. 
There is another polynomial $Kh'(L)$ in one variable and an  equality 
 \begin{equation} \label{polynomialperiodicity}
   Kh(L)(t, Q) = Q^{-\sigma(L)}\bigl\{ (Q^{-1}{+}Q) + (Q^{-1}+tQ^2\cdot Q)\cdot Kh'(L)(tQ^2) \bigr\}
 \end{equation}
When we combine theorems \ref{Lee1} and \ref{Lee4}, we find that the bi-graded Euler characteristic and
the Jones polynomial of an alternating link determine one another.  Obviously, the equality \eqref{JonesfromKhovanov}
shows that one knows $V_L$ if one knows $Kh(t,Q)$. 

To obtain $Kh(t,Q)$ from $V_L(Q^2)$ requires a certain amount of manipulation.  Implementing these 
manipulations in {\em Maple} and {\em Mathematica} is an important step in our experiments.
 Setting $t{=}-1$ in \eqref{polynomialperiodicity} and combining with equation \eqref{JonesfromKhovanov}, one has
\begin{align*}
  (Q^{-1} + Q) \cdot V_L(Q^2) &= Q^{-\sigma(L)}\bigl\{ (Q^{-1}{+}Q) + (Q^{-1}- Q^3)\cdot Kh'(L)(-Q^2) \bigr\}.
\intertext{Consequently,}
 V_L(Q^2) &= Q^{-\sigma(L)}\bigl\{ 1 + \frac{(Q^{-1}- Q^3)}{(Q^{-1}{+}Q)}\cdot Kh'(L)(-Q^2) \bigr\}
\\
&= Q^{-\sigma(L)}\bigl\{ 1 + (1 - Q^2)\cdot Kh'(L)(-Q^2)\bigr\}.
\intertext{Furthermore,}
  Q^{\sigma(L)} \cdot V_L(Q^2) -1 &= (1 - Q^2)\cdot Kh'(L)(-Q^2),
\intertext{or}
Kh'(L)(-Q^2) &= (1 - Q^2)^{-1}\cdot \bigl(Q^{\sigma(L)} \cdot V_L(Q^2) -1\bigr) .
\end{align*}
Replacing $Q^2$ in the last equation by $-tQ^2$ is the last step to obtain $Kh'(L)$ from the Jones polynomial.
Within a computer algebra system, one must first replace $Q^2$ by $-X$ and then replace $X$ by $tQ^2$.  Once one has
$Kh'(L)(tQ^2)$, one obtains $Kh(t,Q)$ directly from equation \eqref{polynomialperiodicity}.
\begin{example}
  We have computed $V_{W(3,2)}(t) = t^{-2} - t^{-1} + 1 - t + t^2$ in example \ref{jonesfig8knot}, so
  \begin{align*}
    Kh'\bigl(W(3,2)\bigr)(-Q^2) &= (1- Q^2)^{-1} \cdot \bigl( Q^0 \cdot ( Q^{-4} -  Q^{-2} - Q^2 + Q^4) \bigr)
\\
  &= (1-Q^2)^{-1} \cdot \bigl( (1 - Q^2) \cdot ( Q^{-4} - Q^2) \bigr)
\\
&= Q^{-4} - Q^2.
  \end{align*}
It follows that $Kh'\bigl( W(3,2) \bigr) (tQ^2) = t^{-2}Q^{-4} + tQ^2$, and 
\begin{multline*}
  Kh\bigl(W(3,2)\bigr) (t, Q) = (Q+Q^{-1})+ (Q^{-1} + tQ^3)(t^{-2}Q^{-4} + tQ^2)
\\
 = t^{-2}Q^{-5} + t^{-1}Q^{-1} + Q^{-1} + Q + tQ + t^2Q^5.
\end{multline*}
\end{example}
\section{Khovanov homology examples}  \label{Khovanov}
Once one has the Khovanov polynomial one can make a plot of the Khovanov
homology in an $(i,j)$-plane as in this example. The Betti number $\dim KH^{i,j}\bigl( W(3,11) \bigr)$ is plotted
at the point with coordinates $(i,j)$.
\begin{figure}[h]
  \centering
  \includegraphics[width=4in]{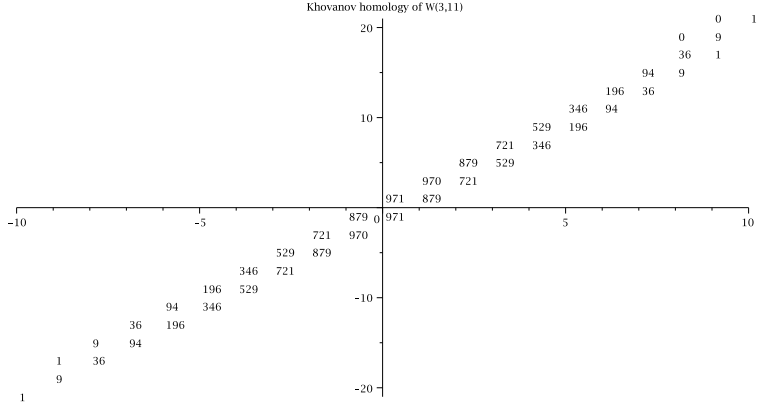}
  \caption{Khovanov homology of $W(3,11)$}
  \label{fig:w311}
\end{figure}
Clearly, as $n$ gets larger, it is going to be harder to make sense of such plots. Notice that the $(1,4)$-periodicity
of the Khovanov homology for these knots makes the information on one of the lines $j-2i = \pm 1$ redundant. 

Taking advantage of this feature, we simplify by recording the Betti numbers along the line $j- 2i = 1$. 
In order to study the asymptotic behavior of Khovanov homology we have to normalize the data.  This is done
by computing the total rank of the Khovanov homology along the line and dividing each Betti number by the
total rank.  We obtain normalized Betti numbers that sum to one.  

This raises the possibility of approximating the distribution of normalized Betti numbers by a probability distribution.
For our baseline experiments we choose to use the normal $N(\mu, \sigma^2)$ probability density function
\begin{equation*}
  f_{\mu, \sigma^2}(x) = \frac{1}{\sigma \sqrt{2\pi}} \exp \Bigl( - \frac{(x-\mu)^2}{2\sigma^2} \Bigr) 
\end{equation*}
Fit a quadratic function 
$q_n(x)= -(\alpha \, x^2 - \beta\, x + \delta)$ 
to the logarithms of the normalized Khovanov dimensions along the line $j=2i+1$
and exponentiate the quadratic function. 
Since the total of the normalized dimensions is 1, we normalize the
exponential, obtaining 
\begin{equation*}
  \rho_n(x) = A_n e^{q_n(x)} \quad \text{satisfying} \quad \int_{-\infty}^{\infty} \rho_n(x) \; dx = 1.
\end{equation*}
To obtain a formula for $A_n$, complete the square
\begin{equation*}
  q_n(x) = -\alpha \cdot \bigl( x - (\beta/2\alpha) \bigr)^2 +\bigl( (\beta^2/4\alpha) - \delta\bigr).
\end{equation*}
Then consider
\begin{align*}
  1 &= A_n \int_{-\infty}^{\infty} \exp q_n(x) \; dx
\\
    &= A_n \cdot 
 \int_{-\infty}^{\infty} \exp \bigl((\beta^2/4\alpha) - \delta \bigr) 
        \cdot \exp \bigl( -\alpha \cdot \bigl( x -(\beta/2\alpha) \bigr)^2\bigr)  \; dx
\\
   &= A_n \cdot \bigl((\beta^2/(4\alpha) - \delta \bigr)  \cdot
           \int_{-\infty}^{\infty} \exp \bigl( -\alpha \cdot \bigl( x -(\beta/2\alpha) \bigr)^2\bigr)  \; dx
\\
   &= A_n \cdot  \bigl((\beta^2/4\alpha)  - \delta\bigr)  \cdot  \sqrt{\pi/\alpha}
\end{align*}
Thus, the expression for $A_n$ is 
\begin{equation*}
  A_n =  \exp -\bigl((\beta^2/4\alpha) - \delta\bigr)\cdot \sqrt{\alpha/\pi}.
\end{equation*}

Equating the expressions
\begin{equation*}
  \rho_n(x) = \frac{1}{\sigma_n \sqrt{2\pi}} \exp \Bigl( - \frac{(x-\mu_n)^2}{2\sigma_n^2} \Bigr) 
\quad \text{and} \quad
\rho_n(x) = A_n \exp ( q_n(x))
\end{equation*}
$\mu_n = \beta/2\alpha$ and the efficient way to the parameter $\sigma_n$ is by solving the equation
\begin{equation*}
\frac{1}{\sigma_n \sqrt{2\pi}} = \rho_n( \frac{\beta}{2\alpha} ) = A_n \exp( q_n(\beta/2\alpha)) 
    = \exp -\bigl( \frac{\beta^2}{4\alpha} - \delta\bigr)\cdot \sqrt{\frac{\alpha}{\pi}} 
\exp\bigl( (\beta^2/4\alpha)  - \delta \bigr)
\end{equation*}
obtaining $\sigma_n = 1/ \sqrt{2\alpha} $.

Working this out for $W(3,10)$,  and carrying only 3 decimal places, 
the raw dimensions are
\begin{center}
  \begin{small}
     \begin{tabular}[h!]{c|c|c|c|c|c|c|c|c|c|c}
$i$  & -9 & -8 & -7 & -6 &  -5 &  -4 & -3&    -2 &-1 &  0
\\
$\dim$  &    1& 9& 36& 94& 196& 346& 529& 721& 879& 970
\\
 $i$  & 1& 2& 3& 4& 5& 6& 7& 8& 9& 10  
\\
$\dim$  & 971& 879& 721& 529& 346& 196& 94& 36& 9& 1 
  \end{tabular}
  \end{small}
\end{center}
and, to three significant digits, the logarithms of the normalized dimensions are
\begin{center}
  \begin{small}
    \begin{tabular}[h!]{c|c|c|c|c|c|c|c|c|c|c}
$i$  & -9 & -8 & -7 & -6 &  -5 &  -4 & -3&    -2 &-1 &  0
\\
    &  -17.9& -15.7& -14.3& -13.3& -12.6& -12.0& -11.6& -11.3& -11.1& -11.0
\\
 $i$  & 1& 2& 3& 4& 5& 6& 7& 8& 9& 10  
\\
& -11.0& -11.1& -11.3& -11.6& -12.0& -12.6& -13.3& -14.3& -15.7& -17.9                 
\end{tabular}
  \end{small}
\end{center}
Fitting a quadratic to this information,  we get
\begin{equation*}
          q_{10}(x) =  -10.7 + 0.0720\, x - 0.0720\, x^2,\quad \alpha = \beta =   0.0720, \quad \delta =     10.7.
\end{equation*}
To three significant digits $\mu_{10} = 0.500$ and $\sigma_{10} = 2.64$.

By the symmetry of Khovanov homology, the mean $\mu_n$ approaches $1/2$ rapidly, so this parameter is of little interest.
On the other hand, relating the parameter $\sigma_n$  to some geometric quantity, say, some hyperbolic invariant 
of the complement of the link, is a very interesting problem. 

For $W(3,10)$, the density function is
\begin{equation*}
  \rho_{10}(x) = 11686.8431618280538\,\sqrt {{\pi }^{-1}}
{{\rm e}^{- 10.7018780565714309+ 0.0716848579220777243\,x- 0.0716848579220778631
\,{x}^{2}}}
\end{equation*}
When placed into standard form, $\mu_{10} =0.5000054030 $ and $\sigma_{10} = 2.640882970 $.
Here is the comparison plot.
\begin{figure}[h]
  \centering
  \includegraphics[height=2.0in]{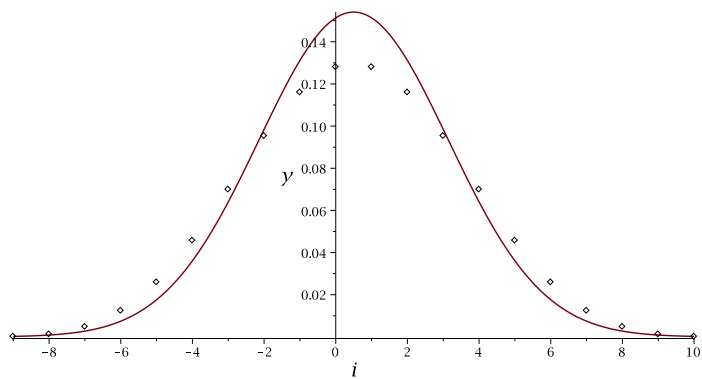}
  \caption{normalized homology of $W(3,10)$ compared with density function}
  \label{fig:w310comp}
\end{figure}
\newline
For the knot $W(3,11)$ 
 the expression for the density function is
\begin{equation*}
  \rho_{11}(x) =29676.8676257830375\,\sqrt {{\pi }^{-1}}{{\rm e}^{-
 11.6724860231789886+ 0.0661625395821569817\,x- 0.0661623073574252735
\,{x}^{2}}}
\end{equation*}
When placed into standard form, $\mu_{11} =0.5000017550 $ and $\sigma_{11} = 2.749031276$.
Here is the comparison plot.
\begin{figure}[h!]
  \centering
  \includegraphics[height=2.0in]{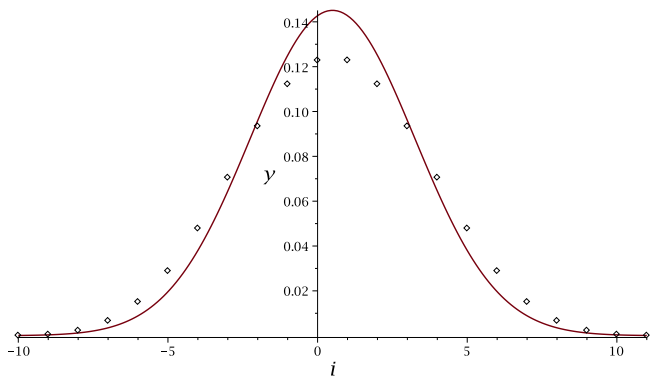}
  \caption{normalized homology of $W(3,11)$ compared with density function}
  \label{fig:w311comp}
\end{figure}
\newline
For $W(3,22)$, the density function is
\begin{equation*}
  \rho_{22}(x) =833596689.149608016\,\sqrt {{\pi }^{-1}}{{\rm e}^{-
 22.2219365040983057+ 0.0353061029354434300\,x- 0.0353061029347388616
\,{x}^{2}}}
\end{equation*}
When placed into standard form, $\mu_{22} =0.500000000 $ and $\sigma_{22} = 3.763224354$.
Here is the comparison plot.
\begin{figure}[h!]
  \centering
  \includegraphics[height=2.0in]{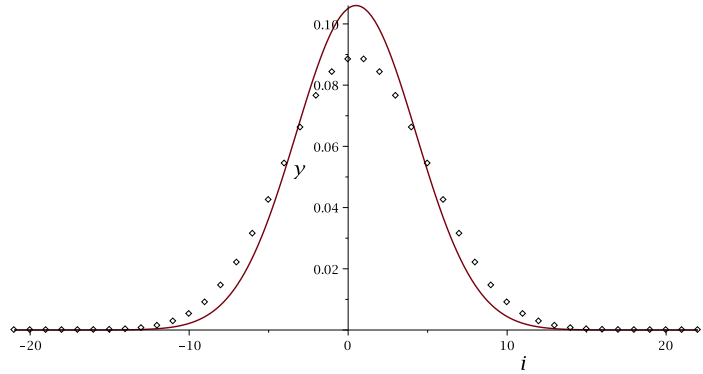}
  \caption{normalized homology of $W(3,22)$ compared with density function}
  \label{fig:w322comp}
\end{figure}
\newline
For $W(3,23)$, the density function is
\begin{multline*}
  \rho_{23}(x) = 2113964949.23002362\,\sqrt {{\pi }^{-1}} \\ 
\cdot {{\rm e}^{-
 23.1731352596503442+ 0.0338545815354610105\,x- 0.0338545815348441914
\,{x}^{2}}}
\end{multline*}
When placed into standard form, $\mu_{23} =0.5000000000 $ and $\sigma_{23} =3.843052143 $.
Here is the comparison plot.
\begin{figure}[h!]
  \centering
  \includegraphics[height=2.0in]{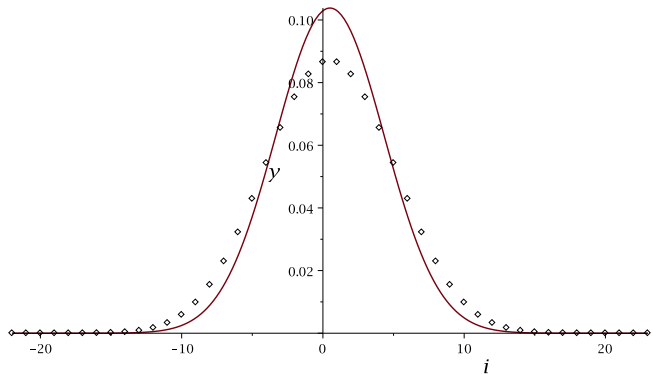}
  \caption{normalized homology of $W(3,23)$ compared with density function}
  \label{fig:w323comp}
\end{figure}
\newline
{\em Maple} worksheets and, later, {\em Mathematica} notebooks will be available at URL prepared by the second-named author.
\section{Data Tables}\label{Data}
This section contains tables of data generated using {\em Maple} to implement some of the results of earlier sections.
The first table collects data for weaving knots $W(3,n)$ for $n \equiv 1 \mod 3$. The first column lists the dimension; 
the second column lists the total dimension of the Khovanov homology lying along the line $j = 2i{+}1$; and the third
column lists the dimension of the vector space ${\mathcal H}^{0,1}\bigl( W(3,n)\bigr)$. 
In section \ref{Khovanov} we approximate the distribution of normalized
Khovanov dimenstions by a standard normal distribution, and we have displayed
graphics comparing the actual distribution with the approximation. 

To quantify those visual impressions, we compute two total deviations.  Let 
\begin{equation*}
  \text{Total dimension} = \sum_{i=-2n}^{2n+1} \dim {\mathcal H}^{i, 2i+1}\bigl( W(3,n) \bigr).
\end{equation*}
For the $L^2$-comparison, we compute
\begin{equation*}
 \Biggl(  \sum_{i = -2n}^{2n+1} \biggl( \rho_n(i) - \frac{\dim {\mathcal H}^{i, 2i+1}\bigl( W(3,n) \bigr)}{\text{Total dimension}} \biggr)^2 \Biggr)^{1/2}
\end{equation*}
For the $L^1$-comparison, we compute 
\begin{equation*}
   \sum_{i = -2n}^{2n+1} \Babs{ \rho_n(i) - \frac{\dim {\mathcal H}^{i, 2i+1}\bigl( W(3,n) \bigr)}{\text{Total dimension} } }
\end{equation*}
The $L^2$ comparisons appear to tend to 0, whereas the $L^1$ comparisons appear to be growing slowly.  
\begin{table} \caption{Data for $W(3,n)$ with $n \equiv 1 \mod 3$}
  \begin{tabular}[h!]{|c|c|c|c|c|c|}
    $n$ & Total dimension & $\dim {\mathcal H}^{0,1}$  & $\sigma$  &  $L^2$-comparison & $L^1$ comparison
\\
    10 & 7563 &   970 & 2.64088  &   0.040509 &  0.134828
\\
    13 & 135721 & 15418 &  2.95616 & 0.0411329 & 0.150599
\\
    16 & 2435423 &  250828 & 3.24564 &  0.040792 & 0.155995
\\
    19  & 43701901 & 4146351 & 3.51395 & 0.040145 & 0.161336
\\
    22  & 784198803 & 69337015 &  3.76322 &  0.039413 & 0.165763
\\
    25  & 14071876561 &  1169613435 & 3.99810 & 0.038678 & 0.167576
\\
    28  &  252509579303 & 19864129051 & 4.22032  & .037971 & 0.167790
\\
    31  &  4531100550901 & 339205938364 & 4.43167 & 0.0373026 & 0.170736
\\
    34  &  81307300336923 & 5818326037345 & 4.63358 & 0.0366758 & 0.172391
\\
    37  & 1459000305513721 &  100173472277125 & 4.82718 & 0.036089 & 0.173119
\\
    40  & 26180698198910063  & 1730135731194046  & 5.01342 & 0.035541 & 0.173178
\\ 
    43  & 469793567274867421 &  29963026081609060  &  5.19305 &  0.035027 & 0.173811
\\
    46  & 8430103512748703523 & 520131503664409798 & 5.36671 & 0.034547 & 0.175059
\\
    49  & $1.51272\cdot 10^{20}$ & $ 9.04765\cdot10^{18}$  &  5.53502 & 0.0340935 & 0.175779
\\
    52  &  $ 2.71447\cdot 10^{21}$  &  $ 1.57670\cdot 10^{20} $ & 5.69838 & 0.033667 & 0.176100
\\
    55  &  $ 4.87091\cdot 10^{22} $ & $ 2.75210\cdot 10^{21} $ & 5.85721 & 0.033265 & 0.176098
\\
    58  &  $ 8.74050\cdot 10^{23} $ & $ 4.81071\cdot 10^{22} $   & 6.01187 & 0.032885 & 0.175898
\\
    61  &  $ 1.56842\cdot 10^{25} $& $  8.42017\cdot 10^{23}$ & 6.16267 & 0.032524 & 0.176778
\\
    64  &  $ 2.81441\cdot 10^{26} $ & $ 1.47552\cdot 10^{25} $ & 6.30989 & 0.032182 & 0.177369
\\
    67  &  $5.05026\cdot 10^{27}$  & $ 2.58843\cdot 10^{26} $ & 6.45376 & 0.031857 & 0.177716
\\
    70  &$ 9.06233\cdot 10^{28} $& $4.54520\cdot 10^{27}$ & 6.59451 & 0.031547 & 0.177859
\\
    73  & $1.62617\cdot 10^{30}$ & $7.98842\cdot 10^{28}$ & 6.73233 & 0.031251 & 0.177831
\\
    76  & $ 2.91804\cdot 10^{31}$ & $1.40517 \cdot 10^{30}$ & 6.86740& 0.030968& 0.177657
\\
    79  &  $5.23621\cdot 10^{32}$ & $ 2.47359 \cdot 10^{31}$ & 6.99986& 0.030697& 0.177995
\\
    82  & $9.39600\cdot 10^{33} $&  $4.35747 \cdot 10^{32}$& 7.12988& 0.030437& 0.178445
\\
    85  &  $1.68604\cdot 10^{35} $&  $7.68116 \cdot 10^{33}$ & 7.25757& 0.030188& 0.178746
\\
    88  & $ 3.02548\cdot 10^{36} $ &  $1.35483 \cdot 10^{35}$ & 7.38305& 0.029948& 0.178918
\\
    91  &  $ 5.42901\cdot 10^{37} $ &  $  2.39106 \cdot 10^{36} $ & 7.50645& 0.029718& 0.178976
\\
   94  & $ 9.74196\cdot 10^{38}$ & $ 4.22211 \cdot 10^{37} $ & 7.62786& 0.029496& 0.178935
\\
    97 &  $ 1.74812\cdot 10^{40} $ & $7.45910  \cdot 10^{38}$ & 7.74736& 0.029282& 0.178807
\\
100  & $3.13688\cdot 10^{41} $ &  $1.31840   \cdot 10^{40} $& 7.86506& 0.029075& 0.178890
\\
121  &  $1.87923\cdot 10^{50} $&   $ 7.18477\cdot 10^{48} $ & 8.64424& 0.027805& 0.179577
\\
142  &  $1.12580\cdot 10^{59} $ & $ 3.97500 \cdot 10^{57} $ & 9.35886& 0.026769& 0.180247
\\
163  & $6.74436\cdot 10^{67} $   &  $2.22337 \cdot 10^{66}$ & 10.0227& 0.025900& 0.180596
\\
184  & $4.04037\cdot 10^{76}$ & $ 1.25398 \cdot 10^{75} $ & 10.6453& 0.025156& 0.180629
\\
205  & $ 2.42049\cdot 10^{85}$ &  $7.11854 \cdot 10^{83} $ & 11.2334& 0.024508& 0.180907
\\
247 & $8.68689\cdot 10^{102}$ & $2.32816 \cdot 10^{101} $ & 12.3258& 0.023423& 0.181027
\\
289 & $3.11764\cdot 10^{120} $ & $7.72623 \cdot 10^{118} $ & 13.3289& 0.022542& 0.181268
  \end{tabular}
\end{table}
\begin{table} \caption{Data for $W(3,n)$ with $n \equiv 2 \mod 3$}
\begin{tabular}{c|c|c|c|c|c}
    $n$ & Total dimension & $\dim {\mathcal H}^{0,1}$  & $\sigma$  &    $L^2$-comparison & $L^1$ comparison
\\
    11 & 19801&2431& 2.74903& 0.040906&  0.141925
\\
  14&355323&38983& 3.05533& 0.041079&  0.153170
\\
 17&6376021&637993& 3.33710& 0.040595&
 0.156595
\\
20&114413063&10591254& 3.59850&
 0.039905& 0.163190
\\
23&2053059121&177671734& 3.84305&
 0.039166& 0.166596
\\
26&36840651123&3004390818& 4.07348&
 0.038438& 0.167789
\\
29&661078661101&51124396786& 4.29190&
 0.037744& 0.168941
\\
 32&11862575248703&874400336044& 4.49997
& 0.037089& 0.171411
\\
35&212865275815561&15018149469823&
 4.69899& 0.036476& 0.172723
\\
38&3819712389431403&258853011125599&
 4.89004& 0.035903& 0.173203
\\
41&68541957733949701&4474997964407374&
 5.07400& 0.035366& 0.173083
\\
44&1229935526821663223&
77563025486587315& 5.25158& 0.034864& 0.174290
\\
47&22070297525055988321&
1347390412214087833& 5.42341& 0.034392& 0.175346
\\
50& $ 3.96035\cdot 10^{20} $& $ 2.34525
\cdot 10^{19} $& 5.59000& 0.033949& 0.175926
\\
53& $ 7.10657\cdot 10^{21} $& $ 4.08927
\cdot 10^{20} $& 5.75181& 0.033531& 0.176131
\\
56& $ 1.27522\cdot 10^{23} $& $ 7.14133
\cdot 10^{21} $& 5.90921& 0.033136& 0.176037
\\
59& $ 2.28829\cdot 10^{24} $& $ 1.24888
\cdot 10^{23} $& 6.06255& 0.032763& 0.176227
\\
62& $ 4.10617\cdot 10^{25} $& $ 2.18679
\cdot 10^{24} $& 6.21213& 0.032408& 0.177005
\\
65& $ 7.36823\cdot 10^{26} $& $ 3.83347
\cdot 10^{25} $& 6.35821& 0.032072& 0.177510
\\
68& $ 1.32218\cdot 10^{28} $& $ 6.72713
\cdot 10^{26} $& 6.50102& 0.031752& 0.177785
\\
71& $ 2.37255\cdot 10^{29} $& $ 1.18163
\cdot 10^{28} $& 6.64077& 0.031446& 0.177867
\\
74& $ 4.25736\cdot 10^{30} $& $ 2.07736
\cdot 10^{29} $& 6.77765& 0.031155& 0.177787
\\
77& $ 7.63953\cdot 10^{31} $& $ 3.65504
\cdot 10^{30} $& 6.91183& 0.030876& 0.177602
\\
80& $ 1.37086\cdot 10^{33} $& $ 6.43571
\cdot 10^{31} $& 7.04347& 0.030609& 0.178163
\\
83& $ 2.45990\cdot 10^{34} $& $ 1.13397
\cdot 10^{33} $& 7.17269& 0.030353& 0.178561
\\
86& $ 4.41412\cdot 10^{35} $& $ 1.99933
\cdot 10^{34} $& 7.29963& 0.030107& 0.178817
\\
89& $ 7.92082\cdot 10^{36} $& $ 3.52717
\cdot 10^{35} $& 7.42441& 0.029871& 0.178949
\\
92& $ 1.42133\cdot 10^{38} $& $ 6.22605
\cdot 10^{36} $& 7.54714& 0.029643& 0.178972
\\
95& $ 2.55048\cdot 10^{39} $& $ 1.09958
\cdot 10^{38} $& 7.66790& 0.029424& 0.178901
\\
98& $ 4.57665\cdot 10^{40} $& $ 1.94290
\cdot 10^{39} $& 7.78679& 0.029212& 0.178747
\\
 119& $ 2.74175\cdot 10^{49} $& $ 1.05696
\cdot 10^{48} $& 8.57308& 0.027914& 0.179650
\\
140& $ 1.64251\cdot 10^{58} $& $ 5.84051
\cdot 10^{56} $& 9.29316& 0.026859& 0.180257
\\
161& $ 9.83989\cdot 10^{66} $& $ 3.26385
\cdot 10^{65} $& 9.96138& 0.025977& 0.180552
\\
182& $ 5.89483\cdot 10^{75} $& $ 1.83951
\cdot 10^{74} $& 10.5875& 0.025223& 0.180539
\\
203& $ 3.53144\cdot 10^{84} $& $ 1.04367
\cdot 10^{83} $& 11.1787& 0.024566& 0.180926
\\
245& $ 1.26740\cdot 10^{102} $& $ 3.41053
\cdot 10^{100} $& 12.2759& 0.023469& 0.181064
\\
287& $ 4.54858\cdot 10^{119} $& $ 1.13115
\cdot 10^{118} $& 13.2829& 0.022580& 0.181221
\\
329& $ 1.63244\cdot 10^{137} $& $ 3.79224
\cdot 10^{135} $& 14.2187& 0.021838& 0.181399
\end{tabular}  
\end{table}
\newpage
\bibliographystyle{plain}
\bibliography{knotinvariantslist}

\begin{thebibliography}{1}

\bibitem{Weaving_vol}
Abhijit Champanerkar, Ilya Kofman, and Jessica~S. Purcell.
\newblock Volume bounds for weaving knots.
\newblock {\em Algebr. Geom. Topol.}, 16(6):3301--3323, 2016.

\bibitem{Jones_poly86}
Pierre de~la Harpe, Michel Kervaire, and Claude Weber.
\newblock On the {J}ones polynomial.
\newblock {\em Enseign. Math. (2)}, 32(3-4):271--335, 1986.

\bibitem{States}
Louis~H. Kauffman.
\newblock State models and the {J}ones polynomial.
\newblock {\em Topology}, 26(3):395--407, 1987.

\bibitem{Lee_Endo04}
Eun~Soo Lee.
\newblock An endomorphism of the {K}hovanov invariant.
\newblock {\em Adv. Math.}, 197(2):554--586, 2005.

\end{thebibliography}
\end{document}